\theoremstyle{theorem}
	\newtheorem{theorem}{Theorem}[section]
	\newtheorem{lemma}[theorem]{Lemma}
	\newtheorem{problem}[theorem]{Problem}
	\newtheorem{proposition}[theorem]{Proposition}
	\newtheorem{corollary}[theorem]{Corollary}
\theoremstyle{definition}
\theoremstyle{remark}
	\newtheorem{remark}[theorem]{Remark}
	\newtheorem{assumption}[theorem]{Assumption}
	\numberwithin{equation}{section}
\def\I{\mathrm{I}}
\def\L2{{\cal L}_2}
\def\L2e{{\cal L}_{2e}}
\def\R{\mathbb{R}}
\def\begequarr{\begin{eqnarray}}
\def\endequarr{\end{eqnarray}}
\def\begequarrs{\begin{eqnarray*}}
	\def\endequarrs{\end{eqnarray*}}
\def\begarr{\begin{array}}
	\def\endarr{\end{array}}
\def\begequ{\begin{equation}}
\def\endequ{\end{equation}}
\def\begdes{\begin{description}}
	\def\enddes{\end{description}}
\def\begenu{\begin{enumerate}}
	\def\begite{\begin{itemize}}
		\def\endite{\end{itemize}}
	\def\endenu{\end{enumerate}}
\def\lef[{\left[\begin{array}}
	\def\rig]{\end{array}\right]}
\def\begcen{\begin{center}}
	\def\endcen{\end{center}}
\def\begrem{\begin{remark}\rm}
	\def\endrem{\end{remark}}
\begin{document}
	\title{Data-Driven Control for Linear Discrete-Time Delay Systems}

\author{Juan G. Rueda-Escobedo, Emilia Fridman and Johannes Schiffer
	\thanks{J.G. Rueda-Escobedo and J. Schiffer are with the Control Systems and Network Control Technology Group, Brandenburg University of Technology Cottbus-Senftenberg (BTU C-S), 03046 Cottbus, Germany (e-mail: \{ruedaesc,schiffer\}@b-tu.de).}
	\thanks{E. Fridman is with the Department of Electrical Engineering-Systems, Tel-Aviv University, Ramat-Aviv, Tel-Aviv 69978, Israel (e-mail: emilia@eng.tau.ac.il)}
}
\maketitle

\begin{abstract}
	The increasing ease of obtaining and processing data together with the growth in system complexity has sparked the interest in moving from conventional model-based control design towards data-driven concepts.  
	Since in many engineering applications time delays naturally arise and are often a source of instability, we contribute to the data-driven control field by introducing data-based formulas for state feedback control design in linear discrete-time time-delay systems with uncertain delays. With the proposed approach, the problems of system stabilization as well as of guaranteed cost and $H_{\infty}$ control design are treated in a unified manner. Extensions to determine the system delays and to ensure robustness in the event of noisy data are also provided.
	
	\textbf{Index terms -- }\textit{Data-driven control, sampled data control, delay systems, robust control}.
\end{abstract}

%\begin{IEEEkeywords}
%	Data-driven control, sampled data control, delay systems, robust control.
%\end{IEEEkeywords}

\section{Introduction}
	There is a growing stream of efforts for developing novel control design methods that only rely on data, enabling a direct control synthesis while avoiding intermediate steps, such as system modeling or system identification \cite{DataDriven2012,HouWang2013}. 
	This trend is driven by several factors. These comprise the increasing ease of obtaining and processing data, which is facilitated by modern computers and communication networks, the growth in system complexity in many modern applications and the desire of systematizing the control design. 
	
	Although this movement has its roots in computer science~\cite{HouWang2013}, where, among other techniques, neural networks, fuzzy systems, online optimization, learning methods, etc., are used for system control, the area of data-driven control has recently shifted towards the development of controller synthesis approaches, which are based on more conventional control strategies. 
	A main reason for this is the need of rigorous guarantees on the system operation, in other words, the need of robust controllers. With this premise in mind, there has been a number of recent contributions in the area of linear system control. The main idea is to assume an underlying linear system to interpret the data and to develop data-driven control formulas which leads to robust controllers by accounting for model mismatches, noise and disturbances. Recent contributions comprise works on linear quadratic tracking \cite{MarkRap2007,Markovsky08}, dynamical feedback \cite{ParkIkeda2009}, predictive control \cite{Coulson2019,Coulson2019_2,Berberich2020}, state-feedback and optimal control \cite{PangBian2018,Tu2018,Rotulo2019,Dean2019,Persis2019,Persis2019_2,vanWaarde2019,vanWaarden2020,Berberich2020_2} 
	as well as extensions to nonlinear discrete Volterra systems and subclasses thereof \cite{BerAll2019,Rueda2020}.
	
	Among the most relevant robust control problems is the stabilization of time-delay systems (TDSs). 
	Time delays are an ubiquitous phenomenon in many engineering applications, such as biological and chemical systems as well as networked control and sampled-data systems \cite{Richard2003, Fridman2014,LiuFridman2019}. Yet in this important direction, to date there are only few contributions from a data-driven control perspective. One of these is \cite{Formentin2011}, where the authors extend the Virtual Reference Feedback Tuning (VRFT) method to single-input single-output (SISO) linear discrete-time (LDT) TDS with known input delay. The VRFT is combined with a data-based Smith predictor to account for the effect of the delay. A similar approach is presented in \cite{Kaneko2011} for a SISO linear continuous-time TDSs with unknown input delay. In the field of optimal control for TDSs, a data-driven quadratic guaranteed cost control for continuous time TDSs with known delay, but unknown system matrices is proposed in \cite{Jiang2019}. Therein, the system data is used to characterize the cost and to update the control gains. In a similar direction, in \cite{Liu2018,Liu2019}, the authors propose a data-based adaptive dynamic programming method for optimal and $H_{\infty}$ control design.	
	
These recent advances motivate the work in the present paper, which is focused on data-driven control design for LDT-TDSs with state and input delays. Inspired by  \cite{Persis2019} and \cite{Fridman2014}, we provide data-driven formulas for the computation of state feedback gains to achieve system stabilization as well as for guaranteed cost and $H_{\infty}$ control design in a unified manner. In contrast to the approaches in \cite{Formentin2011,Liu2018,Liu2019}, the proposed method addresses the case of uncertain and time-varying delays. Furthermore, the impact of noise in the data is analyzed and taken into account for the feedback design, resulting in robust stability guarantees for the closed-loop system. More precisely, the following contributions are made:	
	\begin{enumerate}
		\item	From input-state data and for {\em known} delays, we provide data-based formulas to replace the system model by the data itself. These formulas can be used for system representation or for control design.
		\item	By using these data-based formulas, we provide data-driven formulas for the design of state-feedback gains. These formulas are given for three control problems: stabilization, control with guaranteed cost, and for $H_{\infty}$ control. In all these cases, uncertain delays are considered.
		\item The proposed approach is extended to the cases of {\em unknown} delays and data corrupted by noise.
		In particular, we provide an algorithm to determine the system delays from disturbed data, and we robustify the data-driven formulas to account for the impact of noise.
	\end{enumerate}

	The organization of the paper is as follows. In Section \ref{Sec:Preliminaries} the addressed system is described and the main goals of the paper are outlined. In Section \ref{Sec:DataRepresentation}, a data-based representation for linear discrete-time time-delay systems is introduced. By using the results of Section \ref{Sec:DataRepresentation}, in Section \ref{Sec:ControlFormulas} data-based formulas for control design are given. The formulas address three basic control problems: stabilization, control with guaranteed cost, and $H_{\infty}$ control. In Section \ref{Sec:Uncertainties} we investigate the effect of uncertainties in the data. The application of the control formulas is illustrated with a numerical example in Section \ref{Sec:Example}. In Section \ref{Sec:Conclusion} some concluding remarks are given. Finally, the proofs of all the claims are given in the Appendix.

	\subsection{Notation}
		The set of integer numbers is denoted by $\mathbb{Z}$ and $\mathbb{R}$ represents the set of real numbers. Let $\mathbb{F}$ be either $\mathbb{Z}$ or $\mathbb{R}$. Then $\mathbb{F}_{> 0}$ ($\mathbb{F}_{\geq 0}$) denotes the set of all elements of $\mathbb{F}$ greater than (or equal to) zero. The identity matrix of order $n\in\mathbb{Z}_{>0}$ is denoted by $I_n$. For $A\in\mathbb{R}^{n\times n},$ $A>0$ means that $A$ is symmetric positive definite. The elements below the diagonal of a symmetric matrix are denoted by $\star$. Given a matrix $A\in\mathbb{R}^{n\times m}$, $A^\dagger$ denotes its Moore-Penrose inverse. If $A$ has full-row rank, we have $AA^\dagger=I_n$ with
		\begin{align*}
			A^{\dagger}=A^\top\big(AA^\top\big)^{-1}.
		\end{align*}
		For $v\in\mathbb{R}^n$, $\|v\|_2=\sqrt{v^\top v}$ denotes the Euclidean norm of $v$. For $A\in\mathbb{R}^{m\times n}$, $\|A\|_2=\max_{\|v\|_2=1}\|A\,v\|_2$  with $v\in\mathbb{R}^n$ denotes the induced Euclidean norm of $A$.
				
		Given a signal $z:\mathbb{Z}\to\mathbb{R}^n$ and two integers $k$ and $r$, where $r\geq k$, we define $z_{[k,r]}:=\left\{z(k),z(k+1),\cdots,z(r)\right\}$. Given a signal $z$ and a positive integer $T$, we define
		\begin{align}
			\label{Eq:Notation}
			Z_{\{i\}}=Z_{\{i,T\}}:=\begin{bmatrix}z(i) & z(i+1) \cdots & z(T+i-1)\end{bmatrix}.
		\end{align}

		Finally, given signals $x(k)\in\mathbb{R}^n$ and $h(k)\in\mathbb{Z}_{\geq 0}$ for $k\in\mathbb{Z}$, we introduce the short hand $x_{h(k)}(k):=x(k-h(k))$. If $h$ is independent of $k$, then $x_h(k):=x(k-h)$.

\section{Considered Class of Systems and Objectives}
\label{Sec:Preliminaries}
	The following LDT-TDS is considered in this paper:
	\begin{align}
		x(k+1)&=A_0x(k)+A_1x_{h_{1}(k)}(k)+Bu_{h_{2}(k)}(k),
		\label{Eq:System}
	\end{align}
	with $k\in\mathbb{Z}_{\geq 0}$, state vector $x(k)\in\mathbb{R}^n$ and input $u(k)\in\mathbb{R}^m.$ Furthermore, $h_1(k)$ and $h_2(k)$, where $h_1(k)\in\mathbb{Z}_{\geq 0}$ and $h_2(k)\in\mathbb{Z}_{\geq 0}$, represent uncertain, bounded delays with upper bound $\bar{h}\geq h_i(k)$ for $i=\{1,2\}$ and all $k\in\mathbb{Z}_{\geq 0}$. 
	With respect to the system's initial condition and past inputs we assume $x(j)=\phi_j\in\mathbb{R}^n$ and $u(j)=0$ for $j\in\mathbb{Z}\cap[-\bar{h},0]$. 
	
	The main objective of this paper is to design state feedback controllers {\em directly from input-state data} that stabilize the system \eqref{Eq:System} in the presence of - possibly uncertain - delays $h_{1}(k)$ and $h_{2}(k)$. The analysis is conducted under the following assumptions on the system \eqref{Eq:System}.
	\begin{assumption}
		\phantom{M}
		\begin{enumerate}
			\item	The system matrices $A_0$, $A_1$ and $B$ are constant but unknown.
			\item	An upper bound $\bar{h}\in\mathbb{Z}_{> 0}$ for the input and state delay $h_{1}(k)$ and $h_{2}(k)$, respectively, is known.
			\item	Input and state sequences $u_{[-\bar{h},T]}$ and $x_{[-\bar{h},T]}$ are available, where $T\in\mathbb{Z}_{> 0}$ with $T> \bar{h}$ is the number of recorded samples and the delays $h_1$ and $h_2$ were constant during the time window in which the data was recorded.
		\end{enumerate}
		\label{Assum:Main}
	\end{assumption}

	Assumption \ref{Assum:Main}.1 and Assumption \ref{Assum:Main}.2 are standard. 
	Assumption \ref{Assum:Main}.3 can be contextualized as follows. Consider a scenario in which the recorded data is produced in a controlled experiment where the state and input delays are constant. However, during the system operation, the delays might change. Another scenario in which Assumption \ref{Assum:Main}.3 is reasonable is in networked control. Suppose that for generating the data the system is operated in open-loop and that the input delay remains constant. Then the system state can be recorded locally and transmitted later for its processing. Hence, Assumption \ref{Assum:Main}.3 follows. However, when the system is operated in closed-loop, the input delay becomes uncertain (but bounded) due to the transmission of the state measurement through the network \cite[Sec.~7.8.1]{Fridman2014}, \cite[Sec.~3]{LiuFridman2019}.

	In the following, a data-based representation framework is introduced for the system \eqref{Eq:System} under Assumption \ref{Assum:Main}. At first, this is done for the case of {\em known} delays. By using the resulting framework, three control designs are derived in Section~\ref{Sec:ControlFormulas}. These are stabilizing control, guaranteed cost control, and $H_\infty$~control. In addition, in Section \ref{Sec:Uncertainties} the proposed approach is extended to the case of {\em unknown} delays and noisy data.

\section{Data-Based System Representation of Linear Discrete-Time Time-Delay Systems}
\label{Sec:DataRepresentation}
	In this section, we derive a data-based representation of the system \eqref{Eq:System} using the data provided by the sequences $u_{[-\bar{h},T]}$ and $x_{[-\bar{h},T]}$, see Assumption \ref{Assum:Main}.3. To this end, we at first assume that the delays $h_1$ and $h_2$ are known and constant. The case of unknown delays is then treated in Section~\ref{Sec:Uncertainties}. 	Under these considerations, the system \eqref{Eq:System} can be rewritten as
	\begin{align}
		x(k+1)&=\begin{bmatrix}B & A_1 & A_0\end{bmatrix}\left[\begin{array}{c}u_{h_2}(k)\\ x_{h_1}(k)\\ x(k)\end{array}\right]. \label{Eq:System2}
	\end{align}
	To represent the system \eqref{Eq:System2} solely by data, consider also the matrices $U_{h_2,\{0\}}\in\mathbb{R}^{m\times T}$, $X_{h_1,\{0\}}\in\mathbb{R}^{n\times T}$, $X_{\{0\}}\in\mathbb{R}^{m\times T}$, $X_{\{1\}}\in\mathbb{R}^{m\times T}$, and
	\begin{gather}
			W_{0}:=\left[\begin{array}{c}U_{h_2,\{0\}}\\ X_{h_1,\{0\}}\\ X_{\{0\}}\end{array}\right]\in\mathbb{R}^{(m+2n)\times T} \label{Eq:W0}.
	\end{gather}
	Here, the matrices $U_{h_2,\{0\}}$, $X_{h_1,\{0\}}$, $X_{\{0\}}$ and $X_{\{1\}}$ are built using the sequences corresponding to $u_{h_2}(k)$, $x_{h_1}(k)$, $x(k)$ and $x(k+1)$, respectively, in accordance with the definition given in \eqref{Eq:Notation}. We have the following result.

	\begin{proposition}[Open-Loop Data-Based Representation] 
		\label{Prop:OpenLoopRep}		
		The system trajectories of \eqref{Eq:System2} and the ones of the system
		\begin{align}
			x(k+1)&=X_{\{1\}}W^{\dagger}_0\left[\begin{array}{c}u_{h_2}(k)\\ x_{h_1}(k)\\x(k) \end{array}\right] \label{Eq:OpenLoopRep}
		\end{align}
		are equivalent if and only if $W_0$ given in \eqref{Eq:W0} satisfies $\mathrm{rank}(W_{0})=m+2n$. Furthermore, it holds that $[B\;A_1\;A_0]=X_{\{1\}}W^{\dagger}_0$.
		\hfill $\triangledown\triangledown\triangledown$
	\end{proposition}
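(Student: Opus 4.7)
The plan is to exploit the observation that, by writing equation \eqref{Eq:System2} at each recorded time instant $k = 0,1,\dots,T-1$ and stacking the resulting identities as columns, one obtains the matrix identity
\begin{equation*}
X_{\{1\}} = \begin{bmatrix} B & A_1 & A_0 \end{bmatrix} W_0 ,
\end{equation*}
which is the key algebraic bridge between the model and the data. Everything in the proof will follow by pre- and post-multiplication of this identity by $W_0^\dagger$ and by the regressor $[u_{h_2}(k)^\top\ x_{h_1}(k)^\top\ x(k)^\top]^\top$.

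For the sufficiency (``if'') direction, I would assume $\mathrm{rank}(W_0)=m+2n$, so that $W_0$ has full row rank and hence $W_0 W_0^\dagger = I_{m+2n}$ by the identity recalled in the Notation subsection. Multiplying the bridge identity on the right by $W_0^\dagger$ then yields $X_{\{1\}} W_0^\dagger = [B\ A_1\ A_0]$, and substituting this into \eqref{Eq:System2} reproduces \eqref{Eq:OpenLoopRep}. This direction is essentially routine linear algebra.

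For the necessity (``only if'') direction, I would argue as follows. The equivalence of representations \eqref{Eq:System2} and \eqref{Eq:OpenLoopRep} must hold as an identity in the regressor $[u_{h_2}(k)^\top\ x_{h_1}(k)^\top\ x(k)^\top]^\top$, which may take any value in $\mathbb{R}^{m+2n}$; therefore it forces $X_{\{1\}} W_0^\dagger = [B\ A_1\ A_0]$. Combining this with the bridge identity gives $[B\ A_1\ A_0]\bigl(W_0 W_0^\dagger - I_{m+2n}\bigr) = 0$. Since under Assumption \ref{Assum:Main}.1 the system matrices are unknown, the equivalence must be valid irrespectively of the value of $[B\ A_1\ A_0]$; equivalently, by varying over admissible triples $(A_0,A_1,B)$ one concludes $W_0 W_0^\dagger = I_{m+2n}$, which is possible only when $W_0$ has full row rank $m+2n$.

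The main obstacle I anticipate is making the ``only if'' direction clean, because the statement implicitly quantifies over unknown system matrices and over all values of the regressor. The formal hook I will use is that \eqref{Eq:OpenLoopRep} is intended as a substitute for the unknown model, so the identity $X_{\{1\}} W_0^\dagger = [B\ A_1\ A_0]$ must hold for arbitrary $(A_0,A_1,B)$ compatible with the recorded data; this is what promotes the condition $W_0 W_0^\dagger = I_{m+2n}$ to a rank statement on $W_0$ itself.
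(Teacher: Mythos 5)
Your proposal is correct and follows essentially the same route as the paper: the identity $X_{\{1\}}=[\,B\;\;A_1\;\;A_0\,]W_0$ together with full row rank of $W_0$ gives $[\,B\;\;A_1\;\;A_0\,]=X_{\{1\}}W_0^\dagger$ for sufficiency, exactly as in the appendix. For necessity the paper simply invokes the Rouch\'e--Capelli theorem to conclude that with $\mathrm{rank}(W_0)<m+2n$ the triple $(A_0,A_1,B)$ is not uniquely determined by the data; your projector argument over all triples compatible with the recorded data is the same non-uniqueness idea carried out by hand.
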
	
	The condition on the rank of $W_0$ in Proposition~\ref{Prop:OpenLoopRep} is equivalent to the requirement that the recorded data is rich enough. Since this rank condition is necessary and sufficient, it is analogous to \cite[Eq.~6]{Persis2019}, but for LDT-TDSs of the form \eqref{Eq:System2}. The rank condition $\mathrm{rank}(W_0)=m+2n$ will appear repeatedly along this note. From this rank condition it also follows that a minimal requirement on the data length $\ell$ is that $\ell \geq m+2n+\bar h$ (since the sequences $u_{h_2}(k)$, $x_{h_1}(k)$, $x(k)$ and $x(k+1)$ are used to build $W_0$ in \eqref{Eq:W0}).
		
	In a similar way, one can find a system representation in closed-loop by using the recorded data. While Proposition \ref{Prop:OpenLoopRep} represents an identification-like result, the following lemma provides a system representation that can be used for control design while avoiding the identification of the system matrices.
	
	\begin{lemma}[Closed-Loop Data-Based Representation]
		\label{Lem:ClosedLoopRep}
		Consider the system \eqref{Eq:System2} and assume a feedback control of the form $u(k)=Kx(k)$ with $K\in\mathbb{R}^{m\times n}$. The trajectories of the closed-loop system
		\begin{align}
			x(k+1)&=\begin{bmatrix}BK & A_1 & A_0\end{bmatrix}\left[\begin{array}{c}x_{h_2}(k)\\ x_{h_1}(k)\\ x(k)\end{array}\right]
			\label{Eq:ClosedLoopSys}
		\end{align}
		and the ones of the system
		\begin{align}
			x(k+1)&=X_{\{1\}}G_{K}\left[\begin{array}{c}x_{h_2}(k)\\ x_{h_1}(k)\\ x(k)\end{array}\right] \label{Eq:DataRepCL},
		\end{align}
		where $G_K$ is a $T\times 3n$ matrix satisfying
		\begin{align}
			\begin{bmatrix}K & 0 & 0\\0 & I_n & 0\\ 0 & 0 & I_n\end{bmatrix}=W_0G_{K},
			\label{Eq:GK}  
		\end{align}
		are equivalent for every $K$ if and only if $\mathrm{rank}(W_0)=m+2n$ with $W_0$ given in \eqref{Eq:W0}. In particular, one has 
		\begin{align}
			u(k)=U_{h_2,\{0\}}G_K\begin{bmatrix}I_n & 0 & 0\end{bmatrix}^\top x(k).
			\label{Eq:Kformula}
		\end{align}
		\hfill $\triangledown\triangledown\triangledown$
	\end{lemma}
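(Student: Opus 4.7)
The plan pivots on a single algebraic identity that connects the data to the unknown system matrices: stacking the open-loop dynamics~\eqref{Eq:System2} column by column over $k = 0, \ldots, T-1$ yields
\[
 X_{\{1\}} \;=\; \begin{bmatrix} B & A_1 & A_0 \end{bmatrix} W_0.
\]
This ``data equation'' is exactly what Proposition~\ref{Prop:OpenLoopRep} already exploited, and it will serve here to trade the unknown triple $(B,A_1,A_0)$ for the measured matrix $X_{\{1\}}$.

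For sufficiency, I would assume $\mathrm{rank}(W_0)=m+2n$. Because $W_0$ then has full row rank, $W_0 W_0^{\dagger} = I_{m+2n}$, so the linear matrix equation \eqref{Eq:GK} is solvable by simply taking $G_K := W_0^{\dagger} M_K$, where $M_K$ denotes the block matrix on the left-hand side of~\eqref{Eq:GK}. Left-multiplying \eqref{Eq:GK} by $[B\; A_1\; A_0]$ and invoking the data equation above gives
\[
 X_{\{1\}} G_K \;=\; \begin{bmatrix} B & A_1 & A_0 \end{bmatrix} M_K \;=\; \begin{bmatrix} BK & A_1 & A_0 \end{bmatrix},
\]
which coincides with the system matrix of the closed loop~\eqref{Eq:ClosedLoopSys}; substituting into that equation produces the data-based representation~\eqref{Eq:DataRepCL}. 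Formula~\eqref{Eq:Kformula} then falls out by reading off the top block-row of~\eqref{Eq:GK}, namely $U_{h_2,\{0\}} G_K = [K\; 0\; 0]$: right-multiplying by $[I_n\; 0\; 0]^{\top}$ extracts $K$, and plugging the result into $u(k)=Kx(k)$ yields the stated expression.

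For necessity, I would suppose that \eqref{Eq:GK} is solvable for the designer-chosen $K$ (which may be arbitrary) and argue by contradiction. If $\mathrm{rank}(W_0)<m+2n$, pick a nonzero $v=(v_1^{\top},v_2^{\top},v_3^{\top})^{\top}$ in the left null space of $W_0$, partitioned conformably with $W_0$. Left-multiplying \eqref{Eq:GK} by $v^{\top}$ forces $v_1^{\top}K = 0$, $v_2 = 0$, and $v_3 = 0$ simultaneously; since $K$ ranges over $\mathbb{R}^{m\times n}$ this also yields $v_1 = 0$, contradicting $v\neq 0$.

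The step requiring the most care is precisely this necessity argument: the biconditional is implicitly quantified over $K$ (the lemma must support the control-design use case of Section~\ref{Sec:ControlFormulas}, where $K$ is a decision variable), and it is this universal quantification that provides enough leverage to rule out rank deficiency in $W_0$. Making that quantification explicit, rather than treating $K$ as fixed, is the delicate point; once it is in hand, the two block-identity rows on the right-hand side of \eqref{Eq:GK} immediately pin down two orthogonality constraints, and the freedom in $K$ pins down the third.
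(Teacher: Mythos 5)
Your proof is correct and takes essentially the same route as the paper's: both rest on the data equation $X_{\{1\}}=\begin{bmatrix}B & A_1 & A_0\end{bmatrix}W_0$, obtain \eqref{Eq:DataRepCL} by left-multiplying \eqref{Eq:GK} by $\begin{bmatrix}B & A_1 & A_0\end{bmatrix}$, and read \eqref{Eq:Kformula} off the top block row of \eqref{Eq:GK}. The only differences are in presentation: where the paper cites the Rouch\'e--Capelli theorem for solvability (sufficiency) and its failure (necessity), you construct $G_K=W_0^{\dagger}M_K$ explicitly and prove necessity via a left null-space vector, making explicit the quantification over $K$ that the paper leaves implicit in its ``not for any $\mathrm{diag}(K,I_n,I_n)$'' phrasing.
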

	Note that Lemma \ref{Lem:ClosedLoopRep} not only provides a purely data-based representation for the closed-loop system, but also for the control input, and more importantly, for the feedback gain. These characteristics are exploited in the next section for controller synthesis.

	\begin{remark}
		By setting $h_1$ or $h_2$ in \eqref{Eq:System2} to zero, respectively, the following standard LDT-TDS can be described using the same approach as in Proposition~\ref{Prop:OpenLoopRep}:
		\begin{itemize}
			\item 	$x(k+1)=A_0x(k)+Bu(k-h_2)$,
			\item	$x(k+1)=A_0x(k)+A_1x(k-h_1)+Bu(k)$.
		\end{itemize}
		Similarly, the case $h=h_1=h_2>0$ can be addressed in this manner.
		\hfill $\triangledown\triangledown\triangledown$
	\end{remark}

	\begin{remark} 
		By introducing the augmented state vector, see \cite{Fridman2014},
		\begin{align*}
			x_\text{aug}(k)=\begin{bmatrix} x^\top(k) & x^\top(k-1) & \ldots & x^\top(k-h_1) \end{bmatrix}^\top, 
		\end{align*} 
		it is possible to obtain an augmented non-delayed system dynamics corresponding to \eqref{Eq:System2}, namely
		\begin{align}
			\begin{split} 
				&x_\text{aug}(k+1)=A_\text{aug}x_\text{aug}(k)+B_{\text{aug}}u(k-h_2),\\
				&k\in\mathbb{Z}_{\geq 0},\quad x_{\text{aug}}(k)\in\R^{(h_1+1)n},\quad u(k)\in\R^m,
			\end{split} 
			\label{syssta}
		\end{align} 
		with
		\begin{align}
			\begin{split} 
				A_\text{aug}&=\begin{bmatrix}
					A_0 & 0 & \ldots & A_1\\
					I_n & 0 & \ldots & 0\\
					\vdots & \ddots & \ddots & \vdots\\
					0 & \ldots & I_n & 0
				\end{bmatrix}, \; B_{\text{aug}}=\begin{bmatrix} B \\ 0\\\vdots \\ 0\end{bmatrix}.
			\end{split} 
			\label{ABaug}
		\end{align}
		In principle, this augmented dynamics could also be used to derive a data-driven formula suitable for control design, e.g. by using the results from \cite{Persis2019}. However, this would require that
		\begin{align*} 
			\text{rank} \begin{bmatrix}U_{h_2,\{0\}}\\ X_{\text{aug},\{0\}} \end{bmatrix} = (h_1+1) n +m.
		\end{align*} 
		Clearly, this can be very demanding in the reasonable scenario that $h_1\gg0$ and also seems unnecessary since only $A_0$ and $A_1$ in $A_\text{aug}$ and $B$ in $B_{\text{aug}}$ are unknown, see \eqref{ABaug}. In addition, practically meaningful scenarios in which the delay becomes uncertain (and possibly time-varying) in closed-loop cannot be addressed with the augmented dynamics, see also the discussion below Assumption~\ref{Assum:Main}.
		\hfill $\triangledown\triangledown\triangledown$
	\end{remark}
	
	\begin{remark}
		
		In view of the representation \eqref{ABaug}, Willems' Lemma \cite[Cor.~2]{Willems2005} provides sufficient conditions for guaranteeing $\mathrm{rank}(W_{0})=m+2n$ with $W_{0}$ as in \eqref{Eq:W0}, i.e., the controllability of the pair $(A_{\mathrm{aug}},B_{\mathrm{aug}})$ and the persistence of excitation of order $(h_1+1)n+1$ of the input sequence.
		
		\hfill $\triangledown\triangledown\triangledown$
	\end{remark}
	
\section{Data-Driven Formulas for Controlling Linear Discrete-Time Time-Delay Systems}
\label{Sec:ControlFormulas}
	This section is dedicated to the derivation of data-based controller synthesis formulas for the system \eqref{Eq:System} in the presence of 
	uncertain, time-varying, bounded input and state delays $h_1(k)<\bar h$ and $h_2(k)<\bar h$, respectively. The main tool to achieve this goal is Lemma \ref{Lem:ClosedLoopRep} together with the recorded data sequences $u_{[-\bar{h},T]}$ and $x_{[-\bar{h},T]}$. 
	More precisely, three goals are pursuit in this section for the system \eqref{Eq:System}:
	\begin{enumerate}
		\item	Design of a feedback gain for system stabilization.
		\item	Design of a feedback gain, which ensures a prescribed cost for the input and state trajectory.
		\item	Design of a feedback gain, which ensures a prescribed $L_2$-gain of the system with respect to additive disturbances.
	\end{enumerate}
	With regard to item 3), we note that in the present setting the $H_\infty$ control design is performed in the time domain by using the $L_2$-gain, which we recall is defined as the maximum energy amplification ratio of the system \cite{Fridman2014}. Also, as discussed in Section~\ref{Sec:Preliminaries}, we account for the event that the delays $h_1(k)$ and $h_2(k)$ may become uncertain during the operation of the closed-loop system. 
	
	We start with the first item, i.e., system stabilization, which not only is the simplest scenario, but also paves the path for finding solutions to the other two items. Hence, formally the first problem we address is the following.
		
	\begin{problem}
		\label{Prob:Stabilization}
		Consider the system \eqref{Eq:System} with given $\bar{h}>0$ an upper bound for the input and state delays. Find a feedback gain $K$, such that the origin of system \eqref{Eq:System} with feedback $u(k)=Kx(k)$ is an asymptotically equilibrium point for all uncertain delays $h_1(k)\in[0,\bar{h}]$ and $h_2(k)\in[0,\bar{h}]$.		
		\hfill $\triangledown\triangledown\triangledown$
	\end{problem}

	In an analogous fashion to the non-delayed case \cite{Persis2019}, also in the present delayed setting the matrix $G_K$ in \eqref{Eq:DataRepCL} plays the role of a decision variable in a direct data-driven controller synthesis. By exploiting this fact together with the closed-loop data representation given in Lemma~\ref{Lem:ClosedLoopRep}, we provide the following solution to Problem \ref{Prob:Stabilization}.
	
	\begin{theorem}[Stabilization with Static State Feedback]
		\label{Thm:Stabilization}
		Consider the system \eqref{Eq:System} and suppose that $\mathrm{rank}(W_0)=m+2n$ with $W_0$ as in \eqref{Eq:W0}. Given a positive delay bound $\bar{h}$ and a tuning parameter $\varepsilon>0$, let there exist $n\times n$ matrices $\bar{P}>0$, $\bar{S}>0$, $\bar{R}_i>0$, $\bar{S}_{12,i}$, with $i=\{1,2\}$, and $T\times n$ matrices $Q_1$, $Q_2$ and $Q_3$ such that
		\begin{gather}
			\bar{\mathbf{\Phi}}>0, \label{Eq:Thm01a}\\
			\begin{bmatrix}
			\bar{R}_i & \bar{S}_{12,i}\\ \star & \bar{R}_i
			\end{bmatrix}\geq 0, \label{Eq:Thm01b}
		\end{gather}
		with $\bar{\mathbf{\Phi}}$ given in \eqref{Eq:barBFPhi} on p.~5 and
		\begin{align}
						\begin{split}
			U_{h_2,\{0\}}Q_2 &= U_{h_2,\{0\}}Q_3=0,\\
			X_{h_1,\{0\}}Q_1 &= X_{h_1,\{0\}}Q_3=0,\\
			X_{\{0\}}Q_1 &= X_{\{0\}}Q_2=0,\\
			X_{h_1,\{0\}}Q_2 &= X_{\{0\}}Q_3.
			\end{split}\label{Eq:QRestrictions}
		\end{align}	
		
		Choose the feedback gain as
		\begin{align}
			K = U_{h_2,\{0\}}Q_1\Big(X_{\{0\}}Q_3\Big)^{-1}. \label{Eq:Thm01control}
		\end{align}
		Then for all delays $h_1(k)\in [0,\bar{h}]$ and $h_2(k)\in [0,\bar{h}]$ for all $k\in\mathbb{Z}_{\geq 0}$, the origin of \eqref{Eq:System} in closed-loop with the control $u(k)=Kx(k)$ is asymptotically stable.		
		\hfill $\triangledown\triangledown\triangledown$
	\end{theorem}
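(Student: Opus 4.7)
The plan is to combine the closed-loop data-based representation from Lemma~\ref{Lem:ClosedLoopRep} with a standard Lyapunov--Krasovskii (LK) analysis for discrete-time delay systems, and then to convert the resulting model-based matrix inequality into the data-based LMI~\eqref{Eq:Thm01a}--\eqref{Eq:QRestrictions} by a congruence transformation that eliminates the unknown matrices $A_0,A_1,B$.

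First, I would set $\bar{P}:=X_{\{0\}}Q_3$ (which, together with positivity of $\bar{\mathbf{\Phi}}$, is forced to be positive definite) and $G_K:=[Q_1\ Q_2\ Q_3]\bigl(I_3\otimes\bar{P}^{-1}\bigr)$, and verify that this choice meets~\eqref{Eq:GK} with $K=U_{h_2,\{0\}}Q_1\bar{P}^{-1}$, matching~\eqref{Eq:Thm01control}. A direct block-row computation gives
\[
W_0[Q_1\ Q_2\ Q_3]=\mathrm{blkdiag}\bigl(U_{h_2,\{0\}}Q_1,\,X_{h_1,\{0\}}Q_2,\,X_{\{0\}}Q_3\bigr),
\]
thanks to the six zero-products in~\eqref{Eq:QRestrictions}, while the cross-equality $X_{h_1,\{0\}}Q_2=X_{\{0\}}Q_3=\bar{P}$ aligns the two $I_n$ blocks required by~\eqref{Eq:GK}. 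Post-multiplying by $I_3\otimes\bar{P}^{-1}$ yields exactly~\eqref{Eq:GK}. Consequently, Lemma~\ref{Lem:ClosedLoopRep} applies and the closed-loop system~\eqref{Eq:ClosedLoopSys} admits the representation $x(k+1)=X_{\{1\}}G_K\,\mathrm{col}\!\bigl(x_{h_2}(k),x_{h_1}(k),x(k)\bigr)$ in which the unknown plant matrices have been fully replaced by recorded data.

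Second, I would introduce a standard LK functional for two-delay LDT systems of the form
\[
V(k)=x^\top(k)Px(k)+\sum_{j=k-\bar{h}}^{k-1}x^\top(j)Sx(j)+\bar{h}\sum_{i=1}^{2}\sum_{\theta=-\bar{h}}^{-1}\sum_{j=k+\theta}^{k-1}\eta^\top(j)R_i\eta(j),
\]
with $\eta(j):=x(j+1)-x(j)$ and $P,S,R_i>0$; see~\cite{Fridman2014}. Bounding the forward difference $\Delta V(k):=V(k+1)-V(k)$ by Jensen's inequality together with Park's reciprocally convex combination lemma produces a sufficient matrix inequality for $\Delta V(k)\le -\alpha\|x(k)\|_2^2$. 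The slack matrices $\bar{S}_{12,i}$ enter precisely at the reciprocally convex step and give rise to the side conditions~\eqref{Eq:Thm01b}. I would then congruence-transform the resulting model-based LMI by a block-diagonal $\mathrm{diag}(\bar{P},\bar{P},\dots)$ with $\bar{P}=P^{-1}$, $\bar{S}=\bar{P}S\bar{P}$, $\bar{R}_i=\bar{P}R_i\bar{P}$, and substitute the model-based closed-loop matrix $[BK\ A_1\ A_0]$ by its data-based counterpart $X_{\{1\}}G_K$. Because $G_K(I_3\otimes\bar{P})=[Q_1\ Q_2\ Q_3]$, every occurrence of $G_K\bar{P}$ becomes known data multiplying the decision variables $Q_1,Q_2,Q_3$, so the LMI reduces to~\eqref{Eq:Thm01a}. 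Exponential stability under any admissible delay sequence $h_1(k),h_2(k)\in[0,\bar{h}]$ then follows from the standard quadratic upper/lower bounds on $V$ combined with $\Delta V(k)\le -\alpha\|x(k)\|_2^2$.

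The main obstacle is the quadratic-in-$G_K$ term produced by the $R_i$-weighted block of $\Delta V$: after congruence, a contribution of the form $(X_{\{1\}}G_K\bar{P})^\top\bar{R}_i^{-1}(X_{\{1\}}G_K\bar{P})$ arises and must be linearized without reintroducing the model matrices. To do so, I would apply a Young-type/completion-of-squares bound such as $Y^\top\Pi^{-1}Y\ge \varepsilon(Y+Y^\top)-\varepsilon^2\Pi$, valid for any $\Pi>0$ and conformable $Y$; this is precisely where the tuning parameter $\varepsilon>0$ in the theorem statement enters. Once this bilinearity has been resolved, the remainder of the derivation is a matter of Schur complements and block permutations until the compact form~\eqref{Eq:Thm01a} is reached.
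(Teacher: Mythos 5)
Your opening step---constructing $G_K$ from $Q_1,Q_2,Q_3$ via the equality constraints \eqref{Eq:QRestrictions} and recovering $K=U_{h_2,\{0\}}Q_1\big(X_{\{0\}}Q_3\big)^{-1}$ through Lemma~\ref{Lem:ClosedLoopRep}---matches the paper, and so do the Lyapunov--Krasovskii functional, Jensen's inequality and the reciprocally convex bound that give \eqref{Eq:Thm01b}. The gap lies in how you pass from the model-based inequality to the specific data-based LMI \eqref{Eq:Thm01a}. You identify the congruence matrix with $P^{-1}$ (the inverse of the Lyapunov matrix) and simultaneously with $X_{\{0\}}Q_3$, you eliminate $x(k+1)$ by substituting the dynamics, and you linearize the resulting quadratic-in-$G_K$ block with a Young-type bound $Y^\top\Pi^{-1}Y\ge\varepsilon(Y+Y^\top)-\varepsilon^2\Pi$. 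None of this matches the structure of $\bar{\mathbf{\Phi}}$ in \eqref{Eq:barBFPhi}: there $\bar P$ and $X_{\{0\}}Q_3$ are \emph{independent} decision quantities ($X_{\{0\}}Q_3$ need not even be symmetric, so your claim that it is ``forced to be positive definite'' is unwarranted; only nonsingularity follows, from positivity of the $(5,5)$ block $-\bar P-\bar h^2(\bar R_1+\bar R_2)+\varepsilon\big(X_{\{0\}}Q_3+Q_3^\top X_{\{0\}}^\top\big)$), $\varepsilon$ enters \emph{linearly} through terms such as $-\varepsilon\big(X_{\{1\}}Q_i\big)^\top$ with no $\varepsilon^2$ anywhere, and $x(k+1)$ is retained as a fifth block of the quadratic form. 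These features come from the descriptor method, which the paper uses: it adds the null term $2\big[x^\top(k)P_2^\top+x^\top(k+1)P_3^\top\big]\big(X_{\{1\}}G_K\,\xi(k)-x(k+1)\big)=0$, with $\xi(k)$ the stacked vector of $x_{h_2}(k),x_{h_1}(k),x(k)$, chooses the slack $P_3=\varepsilon P_2$, performs the congruence with $\bar P_2=P_2^{-1}$, and sets $Q_i=G_K$ times the corresponding selector times $\bar P_2$, so that $X_{\{0\}}Q_3=\bar P_2$ is the \emph{slack} inverse, not $P^{-1}$, while the Lyapunov matrix reappears as $\bar P=\bar P_2^\top P\bar P_2$.

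Because the theorem is stated in the analysis direction---feasibility of \eqref{Eq:Thm01a}, \eqref{Eq:Thm01b}, \eqref{Eq:QRestrictions} with the gain \eqref{Eq:Thm01control} must imply $V(k+1)-V(k)<0$---your derivation has to terminate exactly at $\bar{\mathbf{\Phi}}$, or you must map an arbitrary feasible solution of \eqref{Eq:barBFPhi} back into a feasible instance of your alternative inequality. Your closing assertion that the rest is ``Schur complements and block permutations until \eqref{Eq:Thm01a} is reached'' skips precisely this step, and under your identification $\bar P=P^{-1}=X_{\{0\}}Q_3$ it cannot succeed in general: at best you would prove the statement for the restricted subclass of feasible solutions in which $X_{\{0\}}Q_3$ is symmetric and coincides with $\bar P$. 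To close the gap, replace the substitution-plus-Young linearization by the descriptor step with free $P_2$ and $P_3=\varepsilon P_2$, keep $x(k+1)$ in the augmented state, and perform the congruence with $\bar P_2=P_2^{-1}=X_{\{0\}}Q_3$; feasibility of \eqref{Eq:barBFPhi} then yields $V(k+1)-V(k)<0$ for all admissible $h_1(k),h_2(k)\in[0,\bar h]$, and exponential stability follows from the discrete-time Lyapunov--Krasovskii theorem cited in the paper.
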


	\begin{figure*}[t!]
		\begin{align}
			\bar{\mathbf{\Phi}}=\left[\begin{array}{c c c c c}
			\bar{\mathbf{\Phi}}_{11} & \bar{\mathbf{\Phi}}_{12} & \bar{\mathbf{\Phi}}_{13} & -\bar{S}_{12,1}-\bar{S}_{12,2} & \bar{\mathbf{\Phi}}_{15}\\
				\star & 2\bar{R}_1-\bar{S}_{12,1}-\bar{S}^\top_{12,1} & 0 & -\bar{R}_1+\bar{S}_{12,1} & -\varepsilon \big(X_{\{1\}}Q_2\big)^\top\\
				\star & \star & 2\bar{R}_2-\bar{S}_{12,2}-\bar{S}^\top_{12,2} & -\bar{R}_2+\bar{S}_{12,2} & -\varepsilon \big(X_{\{1\}}Q_1\big)^\top\\
				\star & \star & \star & \bar{R}_1+\bar{R}_2+\bar{S} & 0\\
				\star & \star & \star & \star & \bar{\mathbf{\Phi}}_{55}
			\end{array}\right], \label{Eq:barBFPhi}
		\end{align}
		\vspace*{-10pt}
		\begin{gather*}
			\bar{\mathbf{\Phi}}_{11}=\bar{P}-\bar{S}+(1-\bar{h}^2)(\bar{R}_1+\bar{R}_2)-X_{\{1\}}Q_3-\big(X_{\{1\}}Q_3\big)^\top,\;\bar{\mathbf{\Phi}}_{12}=-\bar{R}_1+\bar{S}_{12,1}-X_{\{1\}}Q_2,\\
			\bar{\mathbf{\Phi}}_{13}=-\bar{R}_2+\bar{S}_{12,2}-X_{\{1\}}Q_1,\; \bar{\mathbf{\Phi}}_{15}=\bar{h}^2(\bar{R}_1+\bar{R}_2)+X_{\{0\}}Q_3-\varepsilon\left(X_{\{1\}}Q_3\right)^\top,\\
			\bar{\mathbf{\Phi}}_{55}=-\bar{P}-\bar{h}^2(\bar{R}_1+\bar{R}_2)+\varepsilon\big(X_{\{0\}}Q_3+Q^\top_{3}X^\top_{\{0\}}\big).
		\end{gather*}
	\end{figure*}
	
	Note that $\bar{\Phi}>0$ in \eqref{Eq:barBFPhi} implies that $\bar{\Phi}_{55}>0$, meaning that $X_{\{0\}}Q_3+Q^\top_3X^\top_{\{0\}}>0$, i.e., $X_{\{0\}}Q_3$ is nonsingular. To build the inequalities \eqref{Eq:Thm01a} and \eqref{Eq:Thm01b} only the recorded data from the sequences $x_{[-\bar{h},T]}$ and $u_{[-\bar{h},T]}$ is needed. Once the matrices $Q_1$, $Q_2$ and $Q_3$ are found such that \eqref{Eq:Thm01a}, \eqref{Eq:Thm01b} and \eqref{Eq:QRestrictions} hold, the feedback gain $K$ can be computed directly from \eqref{Eq:Thm01control}. In this way, the process of identifying the system matrices and the a posteriori controller design is combined into a single direct data-driven synthesis step.
	
	Differently from the non-delay case, see e.g., \cite{Persis2019}, in the present setting the linear matrix inequalities (LMIs) \eqref{Eq:Thm01a} and \eqref{Eq:Thm01b} are accompanied by the equality constraints \eqref{Eq:QRestrictions}. The reason for this lies in the closed-loop representation \eqref{Eq:DataRepCL} and in particular \eqref{Eq:GK}. To see this, consider the right hand-side of \eqref{Eq:GK}. Not only the matrices in the main diagonal are needed to obtain \eqref{Eq:DataRepCL}, but also the zeros, which gives rise to the equality constraints \eqref{Eq:QRestrictions}. This does not happen in the non-delay case since there the closed-loop system is fully described by the single matrix $A+BK$, while in the present case three separated matrices are required.
	
	Once a solution for Problem \ref{Prob:Stabilization} is given, we can think of including performance criteria in the controller design. For linear systems, it is common to attempt the minimization of the system trajectories and the control effort. This results in a linear quadratic regulator (LQR) design. However, for systems of the form (II.1) an optimal control gain does not exist due to the uncertain delays \cite[Sec.~6.2.3]{Fridman2014}. Instead, one can attempt to find a feedback gain which guarantees a certain cost. This yields the problem formulation below.

	\begin{problem}
		\label{Prob:GuaranteedCostControl}
		Consider the system \eqref{Eq:System} with $x(0)=x_0$ and $x(k)=0$ for $k<0$ with cost function
		\begin{align}
			J&=\sum^{\infty}_{j=0}z^\top(k)z(k),
			\label{Eq:CostFunction}
		\end{align}
		and performance output 
		\begin{align}
			z(k)=L_1x(k)+L_2x_{h_{1}(k)}(k)+Du_{h_{2}(k)}(k), \label{Eq:PerfOutput}
		\end{align} 
		with $z(k)\in\mathbb{R}^q$ and constant matrices $L_1\in\mathbb{R}^{q\times n}$, $L_2\in\mathbb{R}^{q\times n}$ and $D\in\mathbb{R}^{q\times m}$.
		%Consider also the upper bound for the delays $\bar{h}>0$ as in Assumption \ref{Assum:Main}. 
		Given a cost $\delta>0$, find a feedback gain $K$ that guarantees  $J\leq\delta$ for all uncertain delays $h_1(k)\in[0,\bar{h}]$ and $h_2(k)\in[0,\bar{h}].$	
		\hfill $\triangledown\triangledown\triangledown$
	\end{problem}

	The result provided in Theorem \ref{Thm:Stabilization} can be extended to address Problem \ref{Prob:GuaranteedCostControl} by including the effect of the cost $\delta$ and the functional $J$ in the inequalities \eqref{Eq:Thm01a} and \eqref{Eq:Thm01b}. By doing so, we obtain the following result.
	
	\begin{corollary}[Guaranteed Cost Control]
		\label{Cor:GuaranteedCostControl}
		Consider the system \eqref{Eq:System} together with the considerations presented in Problem \ref{Prob:GuaranteedCostControl}. Suppose that $\mathrm{rank}(W_0)=m+2n$ with $W_0$ as in \eqref{Eq:W0}. Given a positive delay bound $\bar{h}$, the cost $\delta>0$ and a tuning parameter $\varepsilon>0$, let there exist $n\times n$ matrices $\bar{P}>0$, $\bar{S}>0$, $\bar{R}_i>0$, $\bar{S}_{12,i}$, with $i=\{1,2\}$, and $T\times n$ matrices $Q_1$, $Q_2$ and $Q_3$ such that
		\begin{gather}
			\bar{\mathbf{\Psi}}=\left[\begin{array}{c c}\bar{\mathbf{\Phi}} & -\mathbf{\kappa}^\top\\ \star & I_q\end{array}\right]>0 \label{Eq:barBFPsi}\\
			\mathbf{\kappa}= \begin{bmatrix}L_1X_{\{0\}}Q_3 & L_2X_{\{0\}}Q_3 & DU_{h_2,\{0\}}Q_1 & 0 & 0\end{bmatrix},\nonumber\\
			\begin{bmatrix}
			\bar{R}_i & \bar{S}_{12,i}\\ \star & \bar{R}_i
			\end{bmatrix}\geq 0, \label{Eq:Cor01b}
		\end{gather}
		together with \eqref{Eq:QRestrictions} are satisfied with $\bar{\mathbf{\Phi}}$ given in \eqref{Eq:barBFPhi}, in addition to
		\begin{gather}
			\begin{bmatrix}\delta & -x^\top_0\\ \star & X_{\{0\}}Q_3+Q^\top_3X^\top_{\{0\}}-\bar{P}\end{bmatrix}>0. \label{Eq:Cor01c}
		\end{gather}		
		Choose the feedback gain
		\begin{align}
			K = U_{h_2,\{0\}}Q_1\Big(X_{\{0\}}Q_3\Big)^{-1}. \label{Eq:Cor01control}
		\end{align}
		Then for all delays $h_1(k)\in[0,\bar{h}]$ and $h_2(k)\in[0,\bar{h}]$ for all $k\in\mathbb{Z}_{\geq 0}$, the origin of \eqref{Eq:System} in closed-loop with the control $u(k)=Kx(k)$ is exponentially stable. Furthermore, this control ensures a guaranteed cost $\delta$ for $J$ given in \eqref{Eq:CostFunction}, i.e., $J\leq\delta$.		
		\hfill $\triangledown\triangledown\triangledown$
	\end{corollary}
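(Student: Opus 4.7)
The plan is to extend the Lyapunov-Krasovskii argument used to establish Theorem \ref{Thm:Stabilization} by augmenting it with the running cost $z^\top(k)\,z(k)$. Concretely, I would reuse the same LKF $V(k)$ (built from the $\bar{P}$-, $\bar{S}$- and $\bar{R}_i$-terms whose transformed versions already appear in $\bar{\mathbf{\Phi}}$) and now aim to show
\begin{align*}
\Delta V(k) + z^\top(k)\,z(k) \leq 0
\end{align*}
along the closed-loop trajectories, uniformly over all admissible delay pairs $(h_1(k),h_2(k))\in[0,\bar{h}]^2$. Summing this inequality from $k=0$ to $\infty$ and invoking the exponential stability delivered by Theorem \ref{Thm:Stabilization} (which is still in force since $\bar{\mathbf{\Psi}}>0$ implies its leading principal block $\bar{\mathbf{\Phi}}>0$) yields $J\leq V(0)$; it then remains only to tie $V(0)$ to $\delta$ via the initial-data LMI \eqref{Eq:Cor01c}.

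The first key step is to re-express $z(k)$ in the data-based coordinates of the LKF analysis. Substituting the control law $u=Kx$ into \eqref{Eq:PerfOutput} gives $z(k)=L_1x(k)+L_2x_{h_1}(k)+DK\,x_{h_2}(k)$, and replacing $K$ by $U_{h_2,\{0\}}Q_1(X_{\{0\}}Q_3)^{-1}$ and using the closed-loop data representation of Lemma \ref{Lem:ClosedLoopRep} allows $z(k)$ to be written as $\kappa\,\xi(k)$ after the same congruence with $X_{\{0\}}Q_3$ that produced $\bar{\mathbf{\Phi}}$; here $\xi(k)$ is the augmented vector ordered as in the blocks of $\bar{\mathbf{\Phi}}$, and $\kappa$ is precisely the row matrix displayed below \eqref{Eq:barBFPsi}, with the two trailing zero blocks reflecting the fact that $z(k)$ does not act on the auxiliary LKF variables filling the last two partitions of $\xi(k)$. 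A Schur complement on the $-I_n$ block of $\bar{\mathbf{\Psi}}$ then transcribes $\bar{\mathbf{\Psi}}>0$ into the matrix inequality encoding $\Delta V(k)+z^\top(k)z(k)\leq 0$, as required.

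For the cost bound, the initialization $x(j)=0$, $u(j)=0$ for $j\in[-\bar{h},0)$ collapses the delayed single and double sums of $V(k)$ at $k=0$, leaving $V(0)=x_0^\top \Pi\,x_0$ for a matrix $\Pi$ naturally associated with $\bar{P}$ through the stabilization proof's congruence. To render the inequality $V(0)\leq\delta$ directly testable from data, one avoids an explicit inverse of $\Pi$ by invoking the Finsler-type identity $(Y-\Pi^{-1})^\top\Pi(Y-\Pi^{-1})\geq 0$ with $Y=X_{\{0\}}Q_3$, which gives the linear bound $\Pi^{-1}\geq Y+Y^\top-Y^\top\Pi\,Y$; substituting $\bar{P}$ for $Y^\top\Pi Y$ produces exactly the $(2,2)$ block $X_{\{0\}}Q_3+Q_3^\top X_{\{0\}}^\top+\bar{P}$ appearing in \eqref{Eq:Cor01c}, and a standard Schur complement on that $2\times 2$ block furnishes $x_0^\top\Pi\,x_0\leq\delta$, closing the argument via $J\leq V(0)\leq\delta$.

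The main technical obstacle lies in the middle step: verifying that the congruence used in the stabilization proof is compatible with the block partitioning underlying $\kappa$, so that $\kappa\,\xi(k)$ faithfully represents $z(k)$ in the transformed coordinates, and in particular that the zero blocks of $\kappa$ line up with exactly those rows of $\bar{\mathbf{\Phi}}$ associated with the auxiliary LKF variables on which $z$ has no dependence. Once this alignment is secured, the rest is a routine concatenation of the stabilization LMI with the Schur-complement manipulations of the cost and initial-data conditions.
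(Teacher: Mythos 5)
Your route is the same as the paper's: reuse the LKF of Theorem~\ref{Thm:Stabilization}, show $V(k+1)-V(k)+z^\top(k)z(k)\leq-\hat{\chi}^\top\Psi\hat{\chi}$ with $\kappa=\begin{bmatrix}L_1 & L_2 & DK & 0 & 0\end{bmatrix}$ (the two trailing zeros correspond to $x_{\bar h}(k)$ and $x(k+1)$, not to ``auxiliary LKF variables''), apply the congruence $\mathrm{diag}(\bar{P}_2,\dots,\bar{P}_2,I_n)$ with $\bar{P}_2=X_{\{0\}}Q_3$, $K\bar{P}_2=U_{h_2,\{0\}}Q_1$ to reach $\bar{\mathbf{\Psi}}$, and conclude $J\leq V(0)=x_0^\top P x_0$ (your explicit summation is exactly what the paper gets by citing \cite[Prop.~6.5]{Fridman2014}), before tying $V(0)$ to $\delta$ through \eqref{Eq:Cor01c}.

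The one step that fails as written is your reduction of $V(0)\leq\delta$ to \eqref{Eq:Cor01c}. Your own bound $(Y-\Pi^{-1})^\top\Pi(Y-\Pi^{-1})\geq0$ with $Y=X_{\{0\}}Q_3$, $\Pi=P$ gives $P^{-1}\geq Y+Y^\top-Y^\top P Y=Y+Y^\top-\bar{P}$, so the $(2,2)$ block it certifies is $X_{\{0\}}Q_3+Q_3^\top X_{\{0\}}^\top-\bar{P}$, not the $+\bar{P}$ block you claim to ``produce exactly.'' The sign is not cosmetic: the Schur complement of the initial-condition LMI yields $\delta>x_0^\top M^{-1}x_0$ with $M$ the $(2,2)$ block, and $V(0)\leq\delta$ then needs $M\leq P^{-1}$; this holds for $M=Y+Y^\top-\bar{P}$ but is generally false for $M=Y+Y^\top+\bar{P}$ (adding $\bar{P}>0$ only enlarges $M$, relaxing the LMI in the wrong direction). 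So with the block as displayed in \eqref{Eq:Cor01c} your argument does not close; with the minus sign it does, and it then coincides with what the paper (tersely) asserts. Be aware that the statement itself appears to carry these sign slips: the appendix works with $+I_n$ in \eqref{App:Psi}, whereas \eqref{Eq:barBFPsi} displays $-I_n$ on the diagonal, under which $\bar{\mathbf{\Psi}}>0$ would be infeasible, so your ``Schur complement on the $-I_n$ block'' likewise presumes the corrected sign. With those two signs fixed, your proposal is the paper's proof.
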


	Another way of introducing performance criteria into the control design is to consider external disturbances affecting the system and to impose restrictions to the response of the system subject to these disturbances. 
   For linear time invariant systems, this is usually done by minimizing the $H_\infty$ norm of the system. In the present setting, the $H_\infty$ control design is performed in the time domain by using the $L_2$-gain. The resulting control problem is formalized as follows.

	\begin{problem}
		\label{Prop:HinftyControl}
		Consider the system \eqref{Eq:System} with an additive disturbance $\omega(k)\in\mathbb{R}^p$ and feedback gain $K$, i.e., 
		\begin{align}
			x(k+1)= A_0x(k)&+A_1x_{h_{1}(k)}(k)\nonumber\\
				&+BKx_{h_{2}(k)}(k)+D_0\omega(k), \label{Eq:PerturbedSys}
		\end{align}
		together with the performance output
		\begin{align}
			z(k)=L_1x(k)+L_2x_{h_{1}(k)}(k)+DKx_{h_{2}(k)}(k), \label{Eq:PerfOutputH}
		\end{align} 
		with $z(k)\in\mathbb{R}^q$ and constant matrices $L_1\in\mathbb{R}^{q\times n}$, $L_2\in\mathbb{R}^{q\times n}$, $D_0\in\mathbb{R}^{n\times p}$ and $D\in\mathbb{R}^{q\times m}$. Fix a constant $\gamma>0$. For all uncertain delays $h_1(k)\in[0,\bar{h}]$ and $h_2(k)\in[0,\bar{h}]$, find a feedback gain $K$ such that, for $\omega(k)=0$, the origin of \eqref{Eq:PerturbedSys} is an asymptotically stable equilibrium point and for $\omega(k)\neq 0$, the system \eqref{Eq:PerturbedSys} has an $L_2$-gain less than $\gamma$.
		
		\hfill $\triangledown\triangledown\triangledown$
	\end{problem}

	As before, it is possible to give a solution to Problem \ref{Prop:HinftyControl} by extending Theorem \ref{Thm:Stabilization} and including the required conditions in the data-based inequalities \eqref{Eq:Thm01a} and \eqref{Eq:Thm01b}. The following results is consistent with such approach.	

	\begin{corollary}[Static $H_\infty$ Control]
		\label{Cor:HinftyControl}
		Consider the system \eqref{Eq:PerturbedSys} together with the considerations given in Problem \ref{Prop:HinftyControl}. Suppose that $\mathrm{rank}(W_0)=m+2n$ with $W_0$ given in \eqref{Eq:W0}. Given positive constants $\bar{h}$ and $\gamma$ and the tuning parameter $\varepsilon>0$, suppose that there exists $n\times n$ matrices $\bar{P}>0$, $\bar{S}>0$, $\bar{R}_i>0$ and $\bar{S}_{12,i}>0$, for $i=\{1,2\}$, and $T\times n$ matrices $Q_1$, $Q_2$ and $Q_3$ such that the following data-based inequalities are satisfied.
		\begin{gather}
			\bar{\mathbf{\Gamma}}=\left[\begin{array}{c c}\bar{\mathbf{\Psi}} & \mathbf{\kappa}^\top_2\\ \star & \gamma I_p\end{array}\right]>0, \label{Eq:barBFGamma}\\
			\mathbf{\kappa}_2=\begin{bmatrix}-D^\top_0 & 0 & 0 & 0 & -\varepsilon D^\top_0 & 0\end{bmatrix},\nonumber\\
			\begin{bmatrix}
			\bar{R}_i & \bar{S}_{12,i}\\ \star & \bar{R}_i
			\end{bmatrix}\geq 0, \label{Eq:Cor02b}
		\end{gather}
		together with \eqref{Eq:QRestrictions}, and where $\bar{\mathbf{\Psi}}$ is given in \eqref{Eq:barBFPsi}. Choose the feedback gain
		\begin{align}
			K = U_{h_2,\{0\}}Q_1\Big(X_{\{0\}}Q_3\Big)^{-1}. \label{Eq:Cor02control}
		\end{align}
		Then for all delays $h_1(k)\in[0,\bar{h}]$ and $h_2(k)\in[0,\bar{h}]$, the origin of \eqref{Eq:PerturbedSys} is an asymptotically stable equilibrium point for $\omega(k)=0$. Furthermore, for $\omega(k)\neq 0$ the system \eqref{Eq:PerturbedSys} has an $L_2$-gain less than $\gamma$.		
		\hfill $\triangledown\triangledown\triangledown$
	\end{corollary}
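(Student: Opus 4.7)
The plan is to graft an $L_2$-gain estimate onto the Lyapunov--Krasovskii argument that already underlies Theorem~\ref{Thm:Stabilization} and Corollary~\ref{Cor:GuaranteedCostControl}. By Lemma~\ref{Lem:ClosedLoopRep}, under $u(k)=Kx(k)$ and the rank condition, \eqref{Eq:PerturbedSys} can be rewritten as $x(k+1) = X_{\{1\}}G_K\,\col(x_{h_2}(k),x_{h_1}(k),x(k)) + D_0\omega(k)$. Combining the constraints \eqref{Eq:QRestrictions} with the identity $X_{\{1\}} = \begin{bmatrix}B & A_1 & A_0\end{bmatrix}W_0$ from Proposition~\ref{Prop:OpenLoopRep} yields $X_{\{1\}}Q_1 = BK\,(X_{\{0\}}Q_3)$, $X_{\{1\}}Q_2 = A_1\,(X_{\{0\}}Q_3)$ and $X_{\{1\}}Q_3 = A_0\,(X_{\{0\}}Q_3)$. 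This is the bridge that translates any model-based LMI into the data-based one that appears in the statement.

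Next, I would reuse the same Lyapunov--Krasovskii functional as in the stabilization proof,
\begin{equation*}
V(k) = x^\top(k) P x(k) + \sum_{j=k-\bar h}^{k-1} x^\top(j) S x(j) + \bar h \sum_{i=1}^{2}\sum_{\ell=-\bar h}^{-1}\sum_{j=k+\ell}^{k-1}\eta^\top(j)R_i\eta(j),
\end{equation*}
with $\eta(j) = x(j+1)-x(j)$, and enforce the dissipation inequality $\Delta V(k) + z^\top(k)z(k) - \gamma\,\omega^\top(k)\omega(k) < 0$. The double-sum terms coming from the $R_i$ are bounded by the discrete Jensen inequality followed by Park's reciprocally-convex combination lemma; this is precisely what generates the cross matrices $\bar S_{12,i}$ and the LMI constraint \eqref{Eq:Cor02b}. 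A descriptor-type slack, weighted by $\varepsilon$, is inserted through the null identity $0 = 2\varepsilon\,x^\top(k+1) P[-x(k+1) + A_0 x(k) + A_1 x_{h_1}(k) + BK x_{h_2}(k) + D_0\omega(k)]$, so that after substitution only the three data blocks $X_{\{1\}}Q_i$ and a residual $D_0\omega(k)$ term remain, exactly as engineered for Theorem~\ref{Thm:Stabilization}.

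Finally, I would apply the congruence $\mathrm{diag}(X_{\{0\}}Q_3,\ldots,X_{\{0\}}Q_3,I)$ and rename $P,S,R_i,S_{12,i}$ to their barred counterparts; this purges the unknown matrices $A_0,A_1,BK$ in favour of $X_{\{1\}}Q_3,X_{\{1\}}Q_2,X_{\{1\}}Q_1$. A first Schur complement extracts the performance term $z^\top z$, producing the row $\kappa$ and the matrix $\bar{\mathbf{\Psi}}$ of \eqref{Eq:barBFPsi}; a second Schur complement on the $\omega$-channel produces the row $\kappa_2$ and the $\gamma I_n$ block, yielding $\bar{\mathbf{\Gamma}}>0$. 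Summing the dissipation inequality from $k=0$ with $x(0)=0$ then delivers the claimed $L_2$-gain bound, while for $\omega\equiv 0$ the argument collapses to that of Theorem~\ref{Thm:Stabilization} and thus gives exponential stability.

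The main obstacle will be bookkeeping the disturbance term $D_0\omega(k)$ across the substitution. Since Lemma~\ref{Lem:ClosedLoopRep} only absorbs $BK$, $A_1$ and $A_0$ into data matrices and cannot absorb $D_0$, the disturbance persists in \emph{two} separate places: once from the $x(k+1)$ row of the dissipation inequality and once, with weight $\varepsilon$, from the descriptor slack. Both copies must survive the congruence and reappear in $\kappa_2$ as the $-Q_3^\top X_{\{0\}}^\top D_0^\top$ and $-\varepsilon Q_3^\top X_{\{0\}}^\top D_0^\top$ blocks exactly as displayed in \eqref{Eq:barBFGamma}. Once this alignment is verified, the remaining Schur-complement manipulations are routine.
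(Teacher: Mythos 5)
Your proposal follows essentially the same route as the paper: the Lyapunov--Krasovskii functional of Theorem~\ref{Thm:Stabilization}, the dissipation inequality $V(k+1)-V(k)+z^\top(k)z(k)-\gamma\,\omega^\top(k)\omega(k)<0$, Jensen plus the reciprocally convex bound (giving \eqref{Eq:Cor02b}), the descriptor slack with the data substitutions $X_{\{1\}}Q_1=BK\,X_{\{0\}}Q_3$, $X_{\{1\}}Q_2=A_1\,X_{\{0\}}Q_3$, $X_{\{1\}}Q_3=A_0\,X_{\{0\}}Q_3$, and a congruence by $\mathrm{diag}(X_{\{0\}}Q_3,\dots,I)$ leading to $\bar{\mathbf{\Gamma}}>0$, exactly as in the paper's appendix.

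One bookkeeping point should be fixed: the null identity of the descriptor method must carry \emph{both} multipliers, $0=2\bigl[x^\top(k)P_2^\top+\varepsilon\,x^\top(k+1)P_2^\top\bigr]\bigl[BKx_{h_{2k}}(k)+A_1x_{h_{1k}}(k)+A_0x(k)+D_0\omega(k)-x(k+1)\bigr]$, with $P_2$ an independent slack variable (not the Lyapunov matrix $P$; the paper sets $P_3=\varepsilon P_2$ and congruences with $\bar P_2=P_2^{-1}=X_{\{0\}}Q_3$, which is why $\bar P$ and $X_{\{0\}}Q_3$ appear as separate objects in $\bar{\mathbf{\Phi}}_{15}$, $\bar{\mathbf{\Phi}}_{55}$). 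In the descriptor setting $x(k+1)$ is a free variable, so both copies of the disturbance coupling in $\kappa_2$ originate from this slack term: the unweighted block lands in the $x(k)$ row (first block of $\kappa_2$) via the $x^\top(k)P_2^\top$ multiplier, and the $\varepsilon$-weighted block in the $x(k+1)$ row---not, as you write, one copy from ``the $x(k+1)$ row of the dissipation inequality.'' With only the $\varepsilon$-weighted multiplier, the data blocks $X_{\{1\}}Q_i$ would be missing from the first three block rows of $\bar{\mathbf{\Phi}}$ and the LMI you obtain would not be \eqref{Eq:barBFGamma}. Once the slack is written with both multipliers, your argument coincides with the paper's proof.
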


\section{Handling Unknown Constant Delays and Noisy Data}
\label{Sec:Uncertainties}
	In this section we address the problems that the delays $h_1$ and $h_2$ of the system \eqref{Eq:System} are unknown and that the available data is corrupted by noise. To this end, we assume that the sequences $x_{[-\bar{h},T]}$ and $u_{[-\bar{h},T]}$ can be expressed as the sum of two sequences:
	\begin{align}
		\begin{split}
			x_{[-\bar{h},T]}&=x^{\text{nom}}_{[-\bar{h},T]}+x^{\delta}_{[-\bar{h},T]},\\
			u_{[-\bar{h},T]}&=u^{\text{nom}}_{[-\bar{h},T]}+u^{\delta}_{[-\bar{h},T]},
		\end{split} \label{Eq:NoisyData}
	\end{align}
	where the superscript `$\text{nom}$' denotes the sequence that corresponds to the dynamics \eqref{Eq:System2}, i.e., the \emph{nominal} part of the data, whereas the superscript `$\delta$' denotes the sequence corresponding to the measurement noise.

	Hence, the objectives of this section are to provide formulas for determining the delays $h_1$ and $h_2$ from the recorded data and to robustify the design of the feedback gains from Section~\ref{Sec:ControlFormulas} with respect to additive noise.
	
	\subsection{Data-Based System Representations for Unknown Constant Delays}
		By using the sequences $u_{[-\bar{h},T]}$ and $x_{[-\bar{h},T]}$ as described in \eqref{Eq:NoisyData}, we can build the matrix $W_0$ as in \eqref{Eq:W0}. Since the construction of $W_0$ is linear, it is possible to split it in two parts, one corresponding to the data generated by the system \eqref{Eq:System2}, i.e. the nominal ('nom') data, and one to the noise, i.e.,
		\begin{align}
			W_0=W^{\text{nom}}_0+W^{\delta}_0. \label{Eq:PerturbedW0}
		\end{align}
		However, $W_0$ in \eqref{Eq:PerturbedW0} will not result in useful data for arbitrary $W^{\delta}_0$. To study when $W_0$ retains the system information, let $U\in\mathbb{R}^{(m+2n)\times(m+2n)}$ and $V\in\mathbb{R}^{T\times T}$ be orthonormal matrices such that $$\mathrm{Range}(U)=\mathrm{Range}(W_0),\quad \mathrm{Range}(V)=\mathrm{Range}(W^\top_0).$$ Consider the factorization \cite[Sec.~2]{Stewart1977}
		\begin{align}
			\begin{split}
				W_0 &= U\begin{bmatrix}W^{11}_0 & 0\end{bmatrix}V^\top,\\
				W^{\delta}_0 &= U\begin{bmatrix}W^{\delta,11}_0 & W^{\delta,12}_0\end{bmatrix}V^\top,
			\end{split} \label{Eq:W0decomp}
		\end{align}
		where $W^{11}_0\in\mathbb{R}^{(m+2n)\times(m+2n)}$,  $W^{\delta,11}\in\mathbb{R}^{(m+2n)\times(m+2n)}$ and $W^{\delta,12}_0\in\mathbb{R}^{(m+2n)\times T}$. By using the factorization \eqref{Eq:W0decomp}, we introduce the following assumptions related to the impact of the noise. 
		
		\begin{assumption}
			\label{Assum:Noise}
			\phantom{M}
			\begin{enumerate}
				\item 	$\mathrm{rank}(W_0)=\mathrm{rank}(W^{\mathrm{nom}}_0)=m+2\,n$.
				\item	$\big\|W^{\delta,11}_0\big\|_2\big\|W^{\dagger}_0\big\|_2=\big\|W^{\delta,11}_0\big\|_2\big\|\big(W^{11}_0\big)^{-1}\big\|_2<1$.
			\end{enumerate}
		\end{assumption}	
		Assumption \ref{Assum:Noise}.1) implies that $W^{\delta}_0$ does not modify the rank of $W^{\mathrm{nom}}_0$, whereas Assumption \ref{Assum:Noise}.2) restricts the size of the perturbation term $W^{\delta}_0$. The premises of Assumption \ref{Assum:Noise} ensure that $W_0$ is an \emph{acute} perturbation of $W^{\mathrm{nom}}_0$ \cite{Stewart1977,Wedin1973}.

		Now, in order to identify the system delays, consider the matrices
		\begin{align}
			W_{0,(i,j)}=\left[\begin{array}{c}U_{i,\{0\}}\\ X_{j,\{0\}}\\ X_{\{0\}}\end{array}\right], \label{Eq:PerturbedW0b}
		\end{align}
		for $i=\{0,1,\cdots,\bar{h}\}$, $j=\{0,1,\cdots,\bar{h}\}$, where $U_{i,\{0\}}$ is built with $u(k-i)$, $X_{j,\{0\}}$ with $x(k-j)$, and $\bar{h}\geq 0$ is the upper bound for the delays. In the unperturbed case, i.e., for $x^{\delta}_{[-\bar{h},T]}=0$ and $u^{\delta}_{[-\bar{h},T]}=0$, one can verify the next rank conditions in order to determine the system delays 
		\begin{align}
			\mathrm{rank}\big(W_{0,(i,j)}\big)=\mathrm{rank}\left(\left[\begin{array}{c}W_{0,(i,j)}\\ \hline X_{\{1\}}\end{array}\right]\right)=m+2n, \label{Eq:DelayIdentNom}
		\end{align}
		for all $i$ and $j$ in $\{0,1,\cdots,\bar{h}\}$. If for some pair $(i^\star,j^\star)$ the condition above holds, then one can take $h_1=j^\star$ and $h_2=i^\star$ since $X_{\{1\}}$ belongs to the row space of $W_{0,(i^\star,j^\star)}$. If for two or more pairs $(i,j)$ the condition \eqref{Eq:DelayIdentNom} holds, then it is not possible to identify the delays from the recorded data. However, in the perturbed case the condition \eqref{Eq:DelayIdentNom} might never hold due to the effect of the noise. Therefore, instead of \eqref{Eq:DelayIdentNom}, we propose to use the orthogonal distance of $X_{\{1\}}$ to the row space of each $W^{(i,j)}_{0}$ in order to determine the delays. This yields the next proposition, for the presentation of which we introduce the matrix
		\begin{align}
			X_{\{1\}}=X^{\mathrm{nom}}_{\{1\}}+X^{\delta}_{\{1\}},
			\label{Eq:X1per}
		\end{align}
		where $X^{\mathrm{nom}}_{\{1\}}$ denotes the part of the data that corresponds to the dynamics of \eqref{Eq:System2} and $X^{\delta}_{\{1\}}$ denotes the part corresponding to the noise. In addition, we define the orthogonal distance	
			\begin{align}
				d_{(i,j)}\Big(X_{\{1\}}\Big):=\left\|X_{\{1\}}\left(I_T-\left(W_{0,(i,j)}\right)^{\dagger}W_{0,(i,j)}\right)\right\|_2, \label{Eq:OrthDis}
			\end{align}		
			and the function $\psi:\mathbb{R}_{\geq 0}\to\mathbb{R}_{\geq 0}$
			\begin{align}
				\psi(\sigma)=\frac{\sigma}{\left[1+\sigma^2\right]^{1/2}}. \label{App:DistAuxpsi}
			\end{align}
			In addition, upper bounds for the noisy matrices are required. These are represented by the positive, data-dependent constants $r_{X^{\delta}_{\{1\}}}$,  $r_{W^{\delta,11}_{0,(i,j)}}$ and $r_{W^{\delta,12}_{0,(i,j)}}$ satisfying
			\begin{align}
				\begin{split}
					&	\big\|X^{\delta}_{\{1\}}\big\|_2 \leq r_{X^{\delta}_{\{1\}}},\quad 				\big\|W^{\delta,12}_{0,(i,j)}\big\|_2 \leq r_{W^{\delta,12}_{0,(i,j)}},\\
					&	\big\|W^{\delta,11}_{0,(i,j)}\big\|_2 \big\|W^{\dagger}_{0,(i,j)}\big\|_2\leq r_{W^{\delta,11}_{0,(i,j)}}\big\|W^{\dagger}_{0,(i,j)}\big\|_2 < 1,
					%\big/,
				\end{split}	\label{Eq:PropDelay01}
			\end{align}
			with $X^{\delta}_{\{1\}}$ and $W^{\delta}_{0,(i,j)}$ as in \eqref{Eq:X1per} and \eqref{Eq:PerturbedW0}, respectively, and $W^{\delta,11}_{0,(i,j)}$ and $W^{\delta,12}_{0,(i,j)}$ as in \eqref{Eq:W0decomp}. Clearly,  $r_{W^{\delta,11}_{0,(i,j)}}$ exists for any data set consistent with Assumption V.1.2).

		\begin{proposition}[Data-Based System Representations for Unknown Delays]
			\label{Prop:DelayIdent}
			Consider the system~\eqref{Eq:System} with corresponding perturbed data sequences $x_{[-\bar{h},T]}$ and $u_{[-\bar{h},T]}$ as introduced in \eqref{Eq:NoisyData}. Let $\bar{h}>0$ be the upper bound for the constant but unknown state and input delays $h_1\in\mathbb{Z}_{\geq 0}$ and $h_2\in\mathbb{Z}_{\geq 0}$. Build the matrices $W_{0,(i,j)}$ as in \eqref{Eq:PerturbedW0b} for $i$ and $j$ in $\{0,1,\cdots,\bar{h}\}$, and suppose that, for all $i$ and $j$, Assumption \ref{Assum:Noise} holds for each of the matrices $W_{0,(i,j)}$ and the respective perturbation $W^{\delta}_{0,(i,j)}$. Furthermore, suppose that the constants $r_{X^{\delta}_{\{1\}}}$,  $r_{W^{\delta,11}_{0,(i,j)}}$ and $r_{W^{\delta,12}_{0,(i,j)}}$ defined in \eqref{Eq:PropDelay01} are known.

			Recall the orthogonal distance $d_{(i^\star,j^\star)}\big(X_{\{1\}}\big)$ given in \eqref{Eq:OrthDis} and the function $\psi(\cdot)$ defined in \eqref{App:DistAuxpsi}. If
			\begin{align}
				d_{(i^\star,j^\star)}\big(X_{\{1\}}\big)\leq  &r_{X^{\delta}_{\{1\}}}+
				\big(\|X_{\{1\}}\|_2+r_{X^{\delta}_{\{1\}}}\big)\cdot \psi\left(\sigma^\star\right)\label{Eq:DelayCond}
			\end{align}	
			where
			\begin{align}
			\sigma^\star=&\frac{r_{W^{\delta,12}_{0,(i^\star,j^\star)}}\left\|W^{\dagger}_{0,(i^\star,j^\star)}\right\|_2}{1-r_{W^{\delta,11}_{0,(i^\star,j^\star)}}\left\|W^{\dagger}_{0,(i^\star,j^\star)}\right\|_2},\label{sigmastar}
			\end{align}	
			for only one pair $(i^\star,j^\star)$, then $h_1=j^\star$ and $h_2=i^\star$. Moreover, the corresponding open- and closed-loop data-based representations are obtained via Proposition \ref{Prop:OpenLoopRep} and Lemma \ref{Lem:ClosedLoopRep}, respectively, by using the matrices $U_{i^\star,\{0\}}$, $X_{j^\star,\{0\}}$ together with $X_{\{0\}}$ and $X_{\{1\}}$.
						
			If condition \eqref{Eq:DelayCond} holds for two or more pairs $(i,j)$, then the delays are not decidable from the available data. 			
			\hfill $\triangledown\triangledown\triangledown$
			\normalcolor
		\end{proposition}
		
		Proposition~\ref{Prop:DelayIdent} provides a tool for deriving data-based system representations for {\em unknown} delays, even in the presence of noise. In addition, the delays themselves are also determined. 	
		
		In general, $(\bar{h}+1)^2$ evaluations of \eqref{Eq:DelayCond} are required. This number reduces to $\bar{h}+1$ if, for example, $h_1=h_2$ or if one of the delays is known. Furthermore, the same idea can be used to identify time-dependent delays. However, in such case, $(\bar{h}+1)^T$ evaluations are required, which might not be computationally feasible.
	
	\subsection{Stabilization with Noisy Data}
		Now we proceed to analyze the impact of noisy data on the controller synthesis formulas derived in Section~\ref{Sec:ControlFormulas}. The main objective is to extend the result of Theorem \ref{Thm:Stabilization} to incorporate a criterion to ensure closed-loop stability of the system \eqref{Eq:System} even when the feedback gain $K$ is computed with corrupted data. In order to account for the impact of the noise in the data, consider the following matrix
		\begin{align}
			\Delta_{[\cdot]}:=\begin{bmatrix}B & A_1 & A_0\end{bmatrix}W^{\delta}_{0}-X^{\delta}_{\{1\}}. \label{Eq:BoundNoise}
		\end{align}
		The quantity $\|\Delta_{[\cdot]}\|_2$ is a measurement of how far the noise is from being a system trajectory. If the noise would correspond to a system trajectory, then it would not affect any of the calculations; though in such case it might not be classified as noise. Therefore, it is logical that only $\Delta_{[\cdot]}$ has an impact on the computation of $K$. By using this measurement of the noise, it is possible to account for it in the feedback  design. This approach yields the next result.
		
		\begin{theorem}[Stabilization with Noisy Data]
			\label{Thm:GainNoise}
			Consider the premises of Theorem \ref{Thm:Stabilization}. Let the recorded data be corrupted by noise as in \eqref{Eq:NoisyData}. Suppose that $\Delta_{[\cdot]}$ in \eqref{Eq:BoundNoise} is bounded as $\|\Delta_{[\cdot]}\|_2\leq \alpha$, with $\alpha>0$ known. Given a positive delay bound $\bar{h}$ and a tuning parameter $\varepsilon>0$, let there exist $n\times n$ matrices $\bar{P}>0$, $\bar{S}>0$, $\bar{R}_i>0$, $\bar{S}_{12,i}$, with $i=\{1,2\}$, $T\times n$ matrices $Q_1$, $Q_2$, $Q_3$, and $\lambda>0,$ such that
			\begin{gather}
				\left[\begin{array}{c c}\bar{\mathbf{\Phi}}-\alpha^2\lambda I_{5n} & \mathbf{Q}^\top\\ \mathbf{Q} & \lambda I_{5T}\end{array}\right]>0, \label{Eq:NoiseThm01a}\\
				\begin{bmatrix}
				\bar{R}_i & \bar{S}_{12,i}\\ \star & \bar{R}_i
				\end{bmatrix}\geq 0, \label{Eq:NoiseThm01b}
			\end{gather}
			together with \eqref{Eq:QRestrictions} hold, where $\bar{\mathbf{\Phi}}$ is given in \eqref{Eq:barBFPhi} and 
			\begin{align}
				\mathbf{Q}=\begin{bmatrix}Q_3 & Q_2 & Q_1 & 0 & 0\\ 0 & 0 & 0 & 0 & 0\\ 0 & 0 & 0 & 0 & 0\\ 0 & 0 & 0 & 0 & 0\\ \varepsilon Q_3 & \varepsilon Q_2 & \varepsilon Q_1 & 0 & 0\end{bmatrix}. \label{Eq:bfQ}
			\end{align}
			Choose the feedback gain
			\begin{align}
				K = U_{h_2,\{0\}}Q_1\Big(X_{\{0\}}Q_3\Big)^{-1}. \label{Eq:NoiseThm01control}
			\end{align}
			Then for all delays $h_1(k)\in[0,\bar{h}]$ and $h_2(k)\in[0,\bar{h}]$ for all $k\in\mathbb{Z}_{\geq 0}$, the origin of \eqref{Eq:System} in closed-loop is asymptotically stable.		
			\hfill $\triangledown\triangledown\triangledown$
		\end{theorem}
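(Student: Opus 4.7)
The plan is to reuse the Lyapunov--Krasovskii argument underlying Theorem~\ref{Thm:Stabilization} and absorb the error caused by the noisy data matrices $W_0$ and $X_{\{1\}}$ into a bounded perturbation of the LMI~\eqref{Eq:Thm01a}. The point of departure is the observation that the true closed-loop matrix still admits a data-based description: using the splittings in~\eqref{Eq:NoisyData} and~\eqref{Eq:X1per} together with the nominal system identity $X_{\{1\}}^{\mathrm{nom}}=[B\ A_1\ A_0]\,W_0^{\mathrm{nom}}$, one obtains
\begin{align*}
[B\ A_1\ A_0]\,W_0 \;=\; X_{\{1\}} + \Delta_{[\cdot]}.
\end{align*}
Multiplying from the right by the matrix $G_K$ of Lemma~\ref{Lem:ClosedLoopRep}, whose columns are encoded (up to the invertible scaling $X_{\{0\}}Q_3$) by $Q_1,Q_2,Q_3$ through the constraints~\eqref{Eq:QRestrictions}, gives $[BK\ A_1\ A_0] = X_{\{1\}}G_K + \Delta_{[\cdot]}G_K$: the true closed-loop dynamics equal the noisy data-based representation plus a structured noise term.

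Rerunning the proof of Theorem~\ref{Thm:Stabilization} with this perturbed identity in place of $X_{\{1\}}G_K=[BK\ A_1\ A_0]$, the positive-definiteness condition on the Lyapunov--Krasovskii decrement becomes $\bar{\mathbf{\Phi}} + \mathbf{E} > 0$, where $\mathbf{E}$ collects the extra terms generated by $\Delta_{[\cdot]}$. Inspecting where $X_{\{1\}}$ enters $\bar{\mathbf{\Phi}}$---in block-row~1 with coefficient~$1$ and in block-row~5 with coefficient~$\varepsilon$, in each case multiplied by $Q_1$, $Q_2$, or $Q_3$---a direct expansion shows that
\begin{align*}
\mathbf{E} \;=\; -\mathbf{M}\mathbf{Q} - \mathbf{Q}^\top \mathbf{M}^\top,
\end{align*}
with $\mathbf{Q}$ exactly the matrix in~\eqref{Eq:bfQ} and $\mathbf{M}\in\mathbb{R}^{5n\times 5T}$ the block matrix having $\Delta_{[\cdot]}$ in block~$(1,1)$, $\varepsilon\,\Delta_{[\cdot]}$ in block~$(5,5)$, and zeros elsewhere.

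The robustification is then a Petersen-type Young--Schur computation. For any $\lambda>0$ one has
\begin{align*}
\mathbf{M}\mathbf{Q} + \mathbf{Q}^\top\mathbf{M}^\top \;\le\; \lambda\,\mathbf{M}\mathbf{M}^\top + \lambda^{-1}\mathbf{Q}^\top\mathbf{Q},
\end{align*}
and the block structure of $\mathbf{M}\mathbf{M}^\top$ combined with $\|\Delta_{[\cdot]}\|_2\le\alpha$ yields $\mathbf{M}\mathbf{M}^\top \le \alpha^2 I_{5n}$ (treating the tuning parameter $\varepsilon$ so that this bound is clean, or absorbing the factor into $\lambda$). Therefore $\bar{\mathbf{\Phi}} - \alpha^2\lambda I_{5n} - \lambda^{-1}\mathbf{Q}^\top\mathbf{Q} > 0$ implies $\bar{\mathbf{\Phi}}+\mathbf{E}>0$ for every admissible noise realisation, and a Schur complement on the $\mathbf{Q}^\top\mathbf{Q}$ block converts the resulting sufficient condition precisely into~\eqref{Eq:NoiseThm01a}. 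Combined with~\eqref{Eq:NoiseThm01b}, \eqref{Eq:QRestrictions}, and the gain formula~\eqref{Eq:NoiseThm01control}, the remainder of the Theorem~\ref{Thm:Stabilization} argument (negative Lyapunov--Krasovskii decrement, exponential estimate) carries over unchanged.

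The main obstacle is the bookkeeping in the middle step: one must track every occurrence of $X_{\{1\}}$ in $\bar{\mathbf{\Phi}}$---with its two distinct weights $1$ and $\varepsilon$---and verify that, after symmetrisation, the induced perturbation assembles exactly into the product $\mathbf{M}\mathbf{Q}+\mathbf{Q}^\top\mathbf{M}^\top$ with $\mathbf{Q}$ as in~\eqref{Eq:bfQ}. A secondary subtlety is tightening the Young bound so that it produces the clean isotropic term $\alpha^2\lambda I_{5n}$ appearing in~\eqref{Eq:NoiseThm01a}, rather than a structured matrix depending on $\varepsilon$; this is the one place where the tuning parameter plays a nontrivial role and where the result departs from a verbatim copy of the nominal proof.
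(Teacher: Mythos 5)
Your proposal is correct and takes essentially the same route as the paper's proof: perturb the descriptor-method identity by the mismatch $X_{\{1\}}G_K-[\,BK \;\; A_1 \;\; A_0\,]=-\Delta_{[\cdot]}G_K$, keep the Lyapunov--Krasovskii argument of Theorem~\ref{Thm:Stabilization}, bound the resulting indefinite term by Young's inequality with parameter $\lambda$ and $\|\Delta_{[\cdot]}\|_2\leq\alpha$, and recover \eqref{Eq:NoiseThm01a} via a Schur complement. Two remarks on the comparison: your derivation of the mismatch (right-multiplying $[\,B \;\; A_1 \;\; A_0\,]W_0=X_{\{1\}}+\Delta_{[\cdot]}$ by $G_K$) is a shortcut for the paper's detour through $\big(W^{\mathrm{nom}}_0\big)^{\dagger}$ and Assumption~\ref{Assum:Noise}, and the $\varepsilon$-bookkeeping subtlety you flag is genuine --- the faithful perturbation carries the weight $\varepsilon$ in the fifth block row (the paper's \eqref{App:TildePhi2} and \eqref{Eq:bfQ} silently drop it), so the clean term $\alpha^2\lambda I_{5n}$ is obtained only after $\varepsilon$-scaling that block row of $\mathbf{Q}$ or rescaling $\alpha$ (e.g., assuming $\varepsilon\leq 1$), exactly the point you identify.
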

		
		For $\alpha=0$, i.e., in the noise free case, the inequality \eqref{Eq:NoiseThm01a} reduces to the one in \eqref{Eq:Thm01a}. As in \eqref{Eq:NoiseThm01a}, the inequalities \eqref{Eq:barBFPsi} and \eqref{Eq:barBFGamma} can be extended to account for data corrupted by noise. Therefore, from Theorem~\ref{Thm:Stabilization} analogous corollaries to Corollary \ref{Cor:GuaranteedCostControl} and Corollary \ref{Cor:HinftyControl} can be derived in a straightforward manner. Hence, their explicit presentation is omitted.

\section{Numerical Example: Unstable Batch Reactor}
\label{Sec:Example}
	
	\begin{table}[tp!]
		\centering	
		\caption{Computed values of the distance $d_{(i,j)}(X_{\{1\}})$ given in \eqref{Eq:OrthDis} for the different values of $i$, with $j=0$ and $T=10$, in the absence of noise with the data recorded from the system \eqref{Eq:ExmSys}.}
		\label{Tab:Distance0}	

		\begin{tabular}{|c || c c c|}
			\hline
			$i$ & $0$ & $1$ & $2$\\ \hline &&&\\[-5pt]
			$d_{(i,0)}(X_{\{1\}})$ & \num{6.85e-4} & \num{2.44e-3} & \num{3.92e-4}\\ \hline\hline
			$i$ & $3$ & $4$ & $5$ \\ \hline &&&\\[-5pt]
			$d_{(i,0)}(X_{\{1\}})$ & \num{2.75e-10}  & \num{2.76e-3} & \num{6.83e-3}\\ \hline\hline
			$i$ & $6$ & $7$ & $8$ \\ \hline &&&\\[-5pt]
			$d_{(i,0)}(X_{\{1\}})$ &  \num{3.88e-3} & \num{1.97e-2} & \num{1.06e-2}\\ \hline\hline
		\end{tabular}
	\end{table}
	
	To exemplify the proposed method, we consider the unstable linearized batch reactor in \cite[pp.~63]{Walsh2001} controlled through a network, and described by the dynamics
	\begin{align}
		\begin{split}
			\dot{x}(t)&=\begin{bmatrix}1.38 & -0.20 & 6.71 & -5.67\\-0.58 & -4.29 & 0 & 0.67\\ 1.06 & 4.27 & -6.65 & 5.89\\ 0.04 & 4.27 & 1.34 & -2.10\end{bmatrix}x(t) \\
				&\qquad\qquad\qquad\quad +\begin{bmatrix}0 & 0\\5.67 & 0\\1.13 & -3.14\\1.13 & 0\end{bmatrix}u(t-h_2).
		\end{split} \label{Eq:ExmSys}
	\end{align}
	The input to the system is generated using a zero-order hold (ZOH) with a sampling time of $10$ [ms]. This sampling time is taken as the base time. Additionally, a constant input delay $h_2=3$ ($30$ [ms]) is introduced in the system \eqref{Eq:ExmSys}. Furthermore, we assume a maximum delay length of $\bar{h}=8$. We consider that the plant has been in operation for a certain time using the PI-control given in~\cite{Walsh2001}. In the context of the present paper, it is assumed that this controller implementation is based on expertise rather than on a model. Furthermore, the plant operation point is assumed  known and corresponds to
	\begin{align}
		\begin{split}
			x_{\text{op}} &= \begin{bmatrix}24.35 & 14 & 48.78 & 63.13\end{bmatrix}^\top,\\
			u_{\text{op}} &= \begin{bmatrix}5.565 & 44.36\end{bmatrix}^\top.
		\end{split} \label{Eq:ExampOP}
	\end{align}
	The overall objective is to stabilize the system around \eqref{Eq:ExampOP}.
	
	\subsection{Scenario 1: Noise-Free Data and Unknown Delay}
	\label{subsec:noisfree}

	For generating the input-state data, and to characterize the system~\eqref{Eq:ExmSys}, we feed the reference $y_{\mathrm{ref}}=[10\;14]^\top$ in combination with the excitation signal $u_{\text{exc}}(t)$ defined below to the PI-control already available in the plant \cite{Walsh2001}.
	\begin{align*}
		u_{\mathrm{exc}}(t)=\begin{bmatrix}10\sin(7\pi\,t)-5\sin(11\pi\,t)\\ 8\sin(9\pi\,t)-6\sin(13\pi\,t) \end{bmatrix}.
	\end{align*}
	The resulting excitation signal is shown in Figure \ref{Fig:ExcU01}. For the control design, we assume $h_2$ constant, but unknown, and given the physical background of the system \eqref{Eq:ExmSys}, we have $h_1=0$.

	As first step, we seek to investigate the value of $h_2$ in the range $\{0,1,\cdots,\bar{h}\}$. Note that for $T=m+n$, which yields a square $W_0$, the distance $d_{(i,j)}(X_{\{1\}})$ defined in \eqref{Eq:OrthDis} is always zero since $I_{n+m}-W_0W^{-1}_0=0$. Therefore, we choose $T=10>n+m$. We identify the length of $h_2$ using the result of Proposition \ref{Prop:DelayIdent}. For the noise free case ($r_{X^{\delta}_{\{1\}}}=r_{W^{\delta,11}_0}=r_{W^{\delta,12}_0}=0$), the criterion given in \eqref{Eq:DelayCond} reads as $d_{(i,j)}(X_{\{1\}})\leq 0$. The resulting values for the distance $d_{(i,j)}(X_{\{1\}})$ for the different values of $i$, with $j=0$ (since $h_1=0$) and $T=10$, are shown in Table \ref{Tab:Distance0}. From Table \ref{Tab:Distance0}, the input delay can be clearly determined as $h_2=3$ since the distance $d_{(3,0)}(X_{\{1\}})$ is practically zero and its value is due to numerical errors.
	
	Now that $h_2$ has been determined, the matrices $X_{\{0\}}$, $X_{\{1\}}$ and $U_{h_2,\{0\}}$ can be built. To illustrate the application of the data-driven controller synthesis from Section \ref{Sec:ControlFormulas}, we consider the stabilization of the system \eqref{Eq:ExmSys} at the operational point \eqref{Eq:ExampOP}. We assume that the network-induced delay takes values in the set $\{0,1,\cdots,5\}$, whereas the input delay remains constant at $h_2=3$. This satisfies the delay upper bound $\bar{h}=8$, which is used in the formulas provided in Theorem \ref{Thm:Stabilization}. By using the data-based matrices $X_{\{0\}}$, $X_{\{1\}}$ and $U_{h_2,\{0\}}$, and following Theorem \ref{Thm:Stabilization}, we solve \eqref{Eq:Thm01a}, \eqref{Eq:Thm01b} and \eqref{Eq:QRestrictions} with $\bar{R}_1=\bar{S}_{12}=0$ and $Q_2=0$ using CVX\footnote{CVX can parse LMIs with equality constraints and process them as a semidefinite program. Therefore, including \eqref{Eq:QRestrictions} is straightforward in this case. For noisy data, a numerically more robust approach consists in jointly minimizing the norms $\|U_{h_2,\{0\}}Q_2\|_2$, $\|U_{h_2,\{0\}}Q_3\|_2$, $\|X_{h_1,\{0\}}Q_1\|_2$, $\|X_{h_1,\{0\}}Q_3\|_2$, $\|X_{\{0\}}Q_1\|_2$, $\|X_{\{0\}}Q_2\|_2$, $\|X_{h_1,\{0\}}Q_2-X_{\{0\}}Q_3\|_2$ subject to the LMIs \eqref{Eq:Thm01a} and \eqref{Eq:Thm01b}, which is a convex problem. If needed, the norm minimization can be transformed into a semidefinite program following \cite{SDP1996}.}\cite{cvx2014}. For this, we used $\varepsilon=3$. This yields the following feedback gain:
	\begin{align}
		\begin{split}
			K &= \begin{bmatrix} 0.813 & -0.282 & 0.115 & -1.121\\ 2.255 & -0.549 & 1.894 & -1.226\end{bmatrix}.
		\end{split} \label{Eq:GainsBR01}
	\end{align}
	To compare our result with a model-based approach, we also computed a stabilizing gain following \cite[Chap.~6]{Fridman2014} by discretizing the batch reactor model in \eqref{Eq:ExmSys} with the given base time of $10$ [ms]. By using the given delay upper bound $\bar{h}=8$ and with $\varepsilon=3$, we obtained the controller gain
	\begin{align}
		K_{\mathrm{MB}} &= \begin{bmatrix}0.338 & -0.511 & -0.081 & -0.626\\ 2.117 & 0.034 & 1.512 & -0.914\end{bmatrix}. \label{Eq:ModelBasedK} 
	\end{align}
	We simulate the stabilization of the system \eqref{Eq:ExmSys} around the operational point \eqref{Eq:ExampOP} for the two gains $K$ and $K_{MB}$ in \eqref{Eq:GainsBR01} and \eqref{Eq:ModelBasedK}, respectively. We used a network induced delay that randomly changed in the proposed range, i.e., between zero and five. In Figure \ref{Fig:ErrorNormBR}, the error norm between the system state and $x_{\text{op}}$ is shown. We can observe that both controllers achieve the task in a similar time, under the same circumstances. Finally, in Figure \ref{Fig:TrajBR}, the response of the system \eqref{Eq:ExmSys} to the control process using $K$ in \eqref{Eq:GainsBR01} is illustrated for reference.
	
	\subsection{Scenario 2: Noisy Data and Unknown Delay}	
	
	In order to evaluate the robustness of the proposed approach under corrupted measurements, we add an uniform distributed random signal $\delta(k)$ to each measurement $x(k)$ and $u(k)$ of the system \eqref{Eq:ExmSys}. The range of $\delta(k)$ corresponds to $[-\num{1e-4},\num{1e-4}]$. As before, for the control design we assume a constant and unknown input delay $h_2$ as well as $h_1=0$, with the same delay upper bound $\bar{h}=8$. For this section, and because we are dealing with data corrupted by noise, we set $T=50$. Now, in order to determine the input delay, and following Proposition \ref{Prop:DelayIdent}, we need to estimate the upper bounds
	\begin{align*}
		r_{X^{\delta}_{\{1\}}}\geq\left\|X^{\delta}_{\{1\}}\right\|_2,\; r_{W^{\delta,11}_0}\geq\left\|W^{\delta,11}_0\right\|_2,\; r_{W^{\delta,12}_0}\geq\left\|W^{\delta,12}_0\right\|_2.
	\end{align*}
	Since the noise follows a uniform distribution, it is bounded in magnitude. We can find the required upper bounds by using the Frobenius norm with the maximum value for each component:
	\begin{align*}
		\left\|X^{\delta}_{\{1\}}\right\|_2&\leq 10^{-4}\cdot \sqrt{n\cdot T}=\num{1.41e-3}=:r_{X^{\delta}_{\{1\}}},\\
		\left\|W^{\delta,11}_0\right\|_2&\leq 10^{-4}(n+m)=\num{6e-4}=:r_{W^{\delta,11}_0},\\
		\left\|W^{\delta,12}_0\right\|_2&\leq 10^{-4}\sqrt{(n+m)(T-m-n)}\\
			& = \num{1.62e-3}=:r_{W^{\delta,12}_0}.
	\end{align*}	
	We proceed to compute the distance $d_{(i,j)}(X_{\{1\}})$ given in \eqref{Eq:OrthDis} and the criterion given in \eqref{Eq:DelayCond}, but with noisy data. The results are shown in Table \ref{Tab:Distance1}. In contrast to the noise-free scenario in Section \ref{subsec:noisfree}, the distance value for $i=3$, i.e., the correct delay length, is not as close to zero as before. Still, using the criterion derived in Proposition \ref{Prop:DelayIdent}, we can correctly identify the input delay as $h_2=3$ since it is the only case in which the criterion \eqref{Eq:DelayCond} is satisfied.
	
	To guarantee a robust closed-loop performance despite the presence of noise, we seek to employ Theorem \ref{Thm:GainNoise} for the controller synthesis.
	Thus in order to proceed, we need to estimate a bound for $\|\Delta_{[\cdot]}\|_2$ in \eqref{Eq:BoundNoise}. From \eqref{Eq:BoundNoise}, we have
	\begin{align*}
		\left\|\Delta_{[\cdot]}\right\|_2\leq \left\|\begin{bmatrix}B & A_0\end{bmatrix}\right\|_2\left\|W^{\delta}_0\right\|_2+\left\|X^{\delta}_{\{1\}}\right\|_2.
	\end{align*}
	Again, using a bound over the Frobenius norm, we obtain
	\begin{align}
		\begin{split}
			\left\|W^{\delta}_0\right\|_2 &\leq 10^{-4}\cdot \sqrt{(m+n)\cdot T}=\num{1.73e-3}.
		\end{split} \label{Eq:ExampAux01}
	\end{align}
	To estimate the norm of the system matrices we use the relation 
	\begin{align}
		&\left\|\begin{bmatrix}B & A_0\end{bmatrix}\right\|_2 =\left\|X^{\mathrm{nom}}_{\{1\}}\left(W^{\mathrm{nom}}_0\right)^{\dagger}\right\|_2 \nonumber\\
		&\leq \left(\left\|X_{\{1\}}\right\|_2+r_{X^{\delta}_{\{1\}}}\right)\frac{\sqrt{2}\|W^{\dagger}_0\|_2}{1-r_{W^{\delta}_0}\|W^{\dagger}_0\|_2}\leq 102, \label{Eq:ExampAux02}
	\end{align}
	where the last step follows from the upper bound for $\|(W^{\mathrm{nom}}_0)^{\dagger}\|_2$ given in \cite[Lem.~3.1]{Wedin1973}. Using \eqref{Eq:ExampAux01} and \eqref{Eq:ExampAux02} we obtain
	\begin{align*}
		\left\|\Delta_{[\cdot]}\right\|_2\leq 0.191.
	\end{align*}
	Thus we have $\|\Delta_{[\cdot]}\|_2\leq \alpha$ with $\alpha=0.191$. Now, we can compute the feedback gain $K$ using Theorem~\ref{Thm:GainNoise} and CVX \cite{cvx2014}, which for $\alpha=0.251$, $\varepsilon=70$ and $\lambda=\num{1.42e3}$ yields
	\begin{align}
		K=\begin{bmatrix} 0.736 & -0.623 & \num{9.17e-2} & -1.07\\ 2.35 & \num{-7.99e-3} & 1.70 & -1.17\end{bmatrix}. \label{Eq:GainsBR02}
	\end{align}
	To test this new feedback gain, we use the same setting as in Section~\ref{subsec:noisfree}, i.e., the stabilization around the operational point $x_{\text{op}}$ in \eqref{Eq:ExampOP} with uncertain network induced delay. The results of this simulation are presented in Figure \ref{Fig:TrajBR2}. As can be seen, despite $K$ being computed using data corrupted by noise, the stabilization is achieved. This demonstrates the robustness of the proposed approach with respect to noisy data.	 
		
	\begin{table}[htp!]
		\centering	
		\caption{Computed values of the distance $d_{(i,j)}(X_{\{1\}})$ given in \eqref{Eq:OrthDis} and the upper bound given in \eqref{Eq:DelayCond} for the different values of $i$, with $j=0$ and $T=50$, with the data recorded from the system \eqref{Eq:ExmSys} corrupted by noise.}
		\label{Tab:Distance1}		
		\begin{tabular}{|c || c c c|}
			\hline
			$i$ & $0$ & $1$ & $2$\\ \hline &&&\\[-5pt]
			$d_{(i,j)}(X_{\{1\}})$ & \num{1.81e0} & \num{1.35e0} & \num{6.71e-1} \\ 
			\eqref{Eq:DelayCond} & \num{1.40e-1} & \num{1.35e-1} & \num{1.30e-1}\\ \hline \hline
			$i$ & $3$ & $4$ & $5$\\ \hline &&&\\[-5pt]
			$d_{(i,j)}(X_{\{1\}})$ & \num{6.88e-4} & \num{5.68e-1} & \num{1.03e0} \\ 
			\eqref{Eq:DelayCond} & \num{1.27e-1} & \num{1.29e-1} & \num{1.36e-1}\\ \hline \hline
			$i$ & $6$ & $7$ & $8$\\ \hline &&&\\[-5pt]
			$d_{(i,j)}(X_{\{1\}})$ & \num{1.40e0} & \num{1.69e0} & \num{2.14e0} \\ 
			\eqref{Eq:DelayCond} & \num{1.56e-1} & \num{1.81e-1} & \num{1.50e-1}\\ \hline 
		\end{tabular}
	\end{table} 

	\begin{figure}[htp!]
		\centering
		\includegraphics[width=0.47\textwidth]{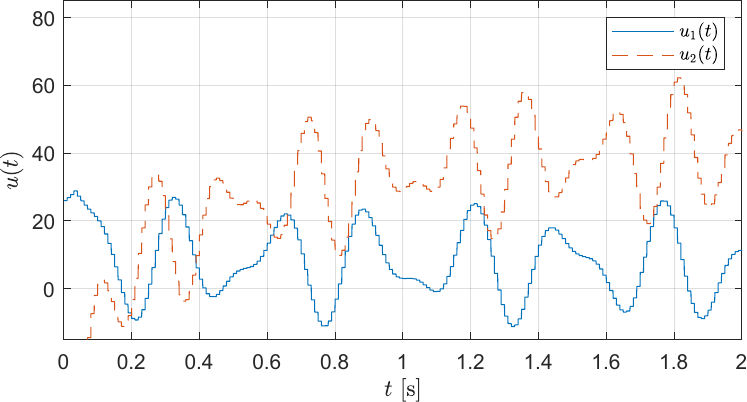}
		\caption{Input signal $u(t)$ used to excite the batch reactor in \eqref{Eq:ExmSys}.}
		\label{Fig:ExcU01}
	\end{figure}
	
	\begin{figure}[htp!]
		\centering
		\includegraphics[width=0.47\textwidth]{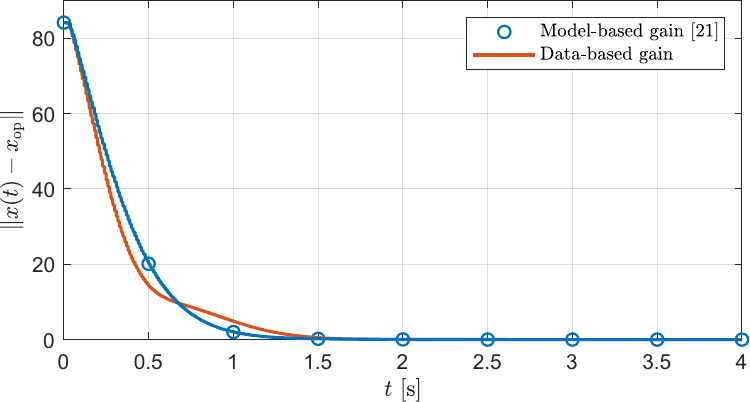}
		\caption{Norm of the error between the state of the batch reactor \eqref{Eq:ExmSys} and the operational point $x_{\text{op}}$ in \eqref{Eq:ExampOP} for the control gain $K$ in \eqref{Eq:GainsBR01} computed using data and the gain $K_{\mathrm{MB}}$ in \eqref{Eq:ModelBasedK} computed following \cite[Chap.~6]{Fridman2014}.}
		\label{Fig:ErrorNormBR}
	\end{figure}
	
	\begin{figure}[htp!]
		\centering
		\includegraphics[width=0.47\textwidth]{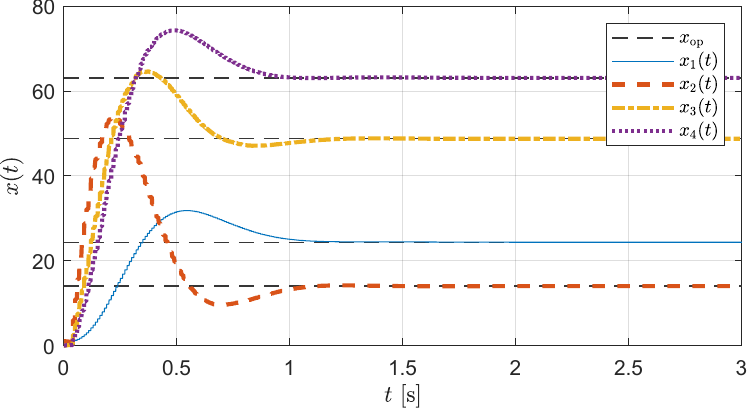}
		\caption{Response of the batch reactor in \eqref{Eq:ExmSys} to the stabilization process around the operational point $x_{\mathrm{op}}$ in \eqref{Eq:ExampOP} using $K$ in \eqref{Eq:GainsBR01}.}
		\label{Fig:TrajBR}
	\end{figure}	
	
	\begin{figure}[htp!]
		\centering
		\includegraphics[width=0.47\textwidth]{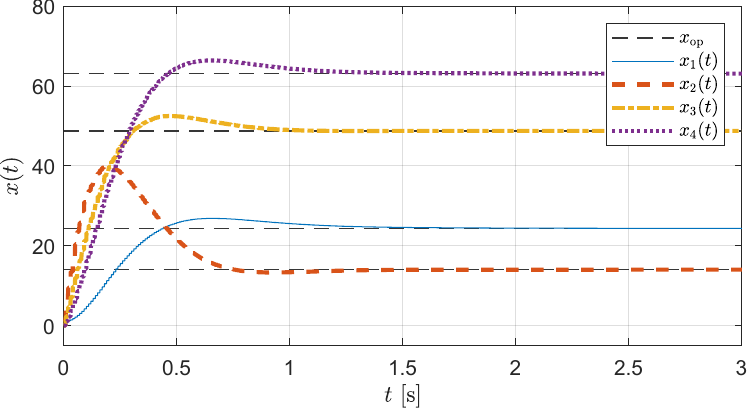}
		\caption{Response of the batch reactor in \eqref{Eq:ExmSys} to the stabilization process around the operational point $x_{\mathrm{op}}$ in \eqref{Eq:ExampOP} using $K$ in \eqref{Eq:GainsBR02} computed with noisy data.}
		\label{Fig:TrajBR2}
	\end{figure}

\section{Conclusions}	

\label{Sec:Conclusion}
	In this work we have presented a method for designing robust controllers for LTD-TDSs relying exclusively on input-state data recorded from the system, i.e., avoiding the system modeling. We have provided explicit data-dependent formulas to compute state feedback gains for stabilization, guaranteed cost control and $H_{\infty}$ control. By accounting on possible noise and unknown constant delays in the recorded data, the method ensures closed-loop stability of the system with the computed gain even under such circumstances.

	Differently from other methods based on data \cite{Persis2019,vanWaarden2020}, we have investigated robustness against uncertain delays. The proposed design approach provides stability guarantees on the closed-loop system through a robust control design. Furthermore, the amount of data required for the control design is relatively small as is shown in the numerical example.
	
	Future work will be geared towards the implementation and experimental validation of the reported results in real-world applications, such as traffic control or power systems operation. Likewise, we plan to investigate extensions to nonlinear systems, possibly by incorporating prior system knowledge as recently proposed in \cite{Berberich2020_2}.

\appendix
\renewcommand{\theequation}{A.\arabic{equation}}
	\begin{figure*}[tb!]
		\begin{align}
		\Phi_1=\left[\begin{array}{c c c c c}
		R_1+R_2 & -R_1+S_{12,1} & -R_2+S_{12,2} & -S_{12,1}-S_{12,2} & 0\\
		\star & 2R_1-S_{12,1}-S^\top_{12,1} & 0 & -R_1+S_{12,1} & 0\\
		\star & \star & 2R_2-S_{12,2}-S^\top_{12,2} & -R_2+S_{12,2} & 0\\
		\star & \star & \star & R_1+R_2 & 0\\
		\star & \star & \star & \star & 0
		\end{array}\right]. \label{App:Phi01}
		\end{align}
	\end{figure*}
	
	\begin{figure*}[t!]
		\begin{align}			
			\Phi_2=\left[\begin{array}{c c c c c}
			 \Phi_{2,11} & P^\top_2X_{\{1\}}G_K\begin{bmatrix}0\\ I_n\\ 0\end{bmatrix} & P^\top_2X_{\{1\}}G_K\begin{bmatrix}I_n\\ 0\\ 0\end{bmatrix} & 0 & -P^\top_2+\left(P^\top_3X_{\{1\}}G_K\begin{bmatrix}0\\0\\I_n\end{bmatrix}\right)^\top\\
			\star & 0 & 0 & 0 & \left(P^\top_3X_{\{1\}}G_K\begin{bmatrix}0\\ I_n\\ 0\end{bmatrix}\right)^\top\\
			\star & \star & 0 & 0 & \left( P^\top_3X_{\{1\}}G_K\begin{bmatrix}I_n\\ 0\\ 0\end{bmatrix}\right)^\top\\
			\star & \star & \star & 0 & 0\\
			\star & \star & \star & \star & -\left(P_3+P^\top_3\right)
			\end{array}\right],
			\label{App:Phi02}
		\end{align}
		\begin{gather*}
			\Phi_{2,11}=P^\top_2X_{\{1\}}G_K\begin{bmatrix}0\\0\\ I_n\end{bmatrix}+\left(P^\top_2X_{\{1\}}G_K\begin{bmatrix}0\\0\\ I_n\end{bmatrix}\right)^\top.
		\end{gather*}
	\end{figure*}
	
	\begin{figure*}[p]
		\begin{gather}
		\Phi = \left[\begin{array}{c c c c c}
		\Phi_{11} & \Phi_{12} & \Phi_{13} & -S_{12,1}-S_{12,2} & \Phi_{15}\\
		\star & 2R_1-S_{12,1}-S^\top_{12,1} & 0 & -R_1+S_{12,1} & -\left( P^\top_3X_{\{1\}}G_K\begin{bmatrix}0\\I_n\\0\end{bmatrix}\right)^\top\\
		\star & \star & 2R_2-S_{12,2}-S^\top_{12,2} & -R_2+S_{12,2} & -\left( P^\top_3X_{\{1\}}G_K\begin{bmatrix}I_n\\0\\0\end{bmatrix}\right)^\top\\
		\star & \star & \star & R_1+R_2+S & 0\\
		\star & \star & \star & \star & \Phi_{55}
		\end{array}\right], \label{App:Phi}
		\end{gather}
		\vspace*{-10pt}
		\begin{gather*}				
			\Phi_{11}=P-S+(1-\bar{h}^2)(R_1+R_2)-P^\top_2X_{\{1\}}G_K\begin{bmatrix}0\\0\\ I_n\end{bmatrix}-\left(P^\top_2X_{\{1\}}G_K\begin{bmatrix}0\\0\\ I_n\end{bmatrix}\right)^\top,\\ \Phi_{12}=-R_1+S_{12,1}-P^\top_2X_{\{1\}}G_K\begin{bmatrix}0\\ I_n\\ 0\end{bmatrix},\; \Phi_{13}=-R_2+S_{12,2}-P^\top_2X_{\{1\}}G_K\begin{bmatrix}I_n\\ 0\\ 0\end{bmatrix},\\ \Phi_{15}=\bar{h}^2(R_1+R_2)+P^\top_2-\left(P^\top_3X_{\{1\}}G_K\begin{bmatrix}0\\0\\I_n\end{bmatrix}\right)^\top,\;\Phi_{55}=-P-\bar{h}^2(R_1+R_2)+(P_3+P^\top_3).
		\end{gather*}
	\end{figure*}
	
	\begin{figure*}[p]
		\begin{gather}
		\bar{\Phi} = \left[\begin{array}{c c c c c}
		\bar{\Phi}_{11} & \bar{\Phi}_{12} & \bar{\Phi}_{13} & -\bar{S}_{12,1}-\bar{S}_{12,2} & \bar{\Phi}_{15}\\
		\star & 2\bar{R}_1-\bar{S}_{12,1}-\bar{S}^\top_{12,1} & 0 & -\bar{R}_1+\bar{S}_{12,1} & -\left(\varepsilon X_{\{1\}}G_K\begin{bmatrix}0\\I_n\\0\end{bmatrix}\bar{P}_2\right)^\top\\
		\star & \star & 2\bar{R}_2-\bar{S}_{12,2}-\bar{S}^\top_{12,2} & -\bar{R}_2+\bar{S}_{12,2} & -\left(\varepsilon X_{\{1\}}G_K\begin{bmatrix}I_n\\0\\0\end{bmatrix}\bar{P}_2\right)^\top\\
		\star & \star & \star & \bar{R}_1+\bar{R}_2+\bar{S} & 0\\
		\star & \star & \star & \star & -\bar{P}-\bar{h}^2(\bar{R}_1+\bar{R}_2)+\varepsilon (\bar{P}_2+\bar{P}^\top_2)
		\end{array}\right], \label{App:barPhi}
		\end{gather}
		\vspace*{-10pt}
		\begin{gather*}
			\bar{\Phi}_{11}=\bar{P}-\bar{S}+(1-\bar{h}^2)(\bar{R}_1+\bar{R}_2)-X_{\{1\}}G_K\begin{bmatrix}0\\0\\ I_n\end{bmatrix}\bar{P}_2-\left(X_{\{1\}}G_K\begin{bmatrix}0\\0\\ I_n\end{bmatrix}\bar{P}_2\right)^\top,\\
			\bar{\Phi}_{12}=-\bar{R}_1+\bar{S}_{12,1}-X_{\{1\}}G_K\begin{bmatrix}0\\ I_n\\ 0\end{bmatrix}\bar{P}_2,\;
			\bar{\Phi}_{13}=-\bar{R}_2+\bar{S}_{12,2}-X_{\{1\}}G_K\begin{bmatrix}I_n\\ 0\\ 0\end{bmatrix}\bar{P}_2,\\
			\bar{\Phi}_{15}=\bar{h}^2(\bar{R}_1+\bar{R}_2)+\bar{P}_2-\varepsilon\left(X_{\{1\}}G_K\begin{bmatrix}0\\0\\I_n\end{bmatrix}\bar{P}_2\right)^\top.
		\end{gather*}
	\end{figure*}
	
	\begin{figure*}[p]
		\begin{align}
			\tilde{\Phi}_2=\left[\begin{array}{c c c c c}
			 P^\top_2\Delta_{[\cdot]}G_K\begin{bmatrix}0\\0\\I_n\end{bmatrix}+\left(P^\top_2\Delta_{[\cdot]}G_K\begin{bmatrix}0\\0\\I_n\end{bmatrix}\right)^\top & P^\top_2\Delta_{[\cdot]} G_K\begin{bmatrix}0\\ I_n\\ 0\end{bmatrix} & P^\top_2\Delta_{[\cdot]} G_K\begin{bmatrix}I_n\\ 0\\ 0\end{bmatrix} & 0 & \left(\varepsilon P^\top_2\Delta_{[\cdot]} G_K\begin{bmatrix}0\\0\\I_n\end{bmatrix}\right)^\top\\
			\star & 0 & 0 & 0 & \left(\varepsilon P^\top_2\Delta_{[\cdot]} G_K\begin{bmatrix}0\\I_n\\0\end{bmatrix}\right)^\top\\
			\star & \star & 0 & 0 & \left(\varepsilon P^\top_2\Delta_{[\cdot]} G_K\begin{bmatrix}I_n\\0\\0\end{bmatrix}\right)^\top\\
			\star & \star & \star & 0 & 0\\
			\star & \star & \star & \star & 0
			\end{array}\right]. \label{App:TildePhi2}
		\end{align}
%		\begin{gather*}
%			\tilde{\Phi}_{2,11}=P^\top_2\Delta_{[\cdot]}G_K\begin{bmatrix}0\\0\\I_n\end{bmatrix}+\left(P^\top_2\Delta_{[\cdot]}G_K\begin{bmatrix}0\\0\\I_n\end{bmatrix}\right)^\top.
%		\end{gather*}
	\end{figure*}

	\begin{proof}[Proof of Proposition \ref{Prop:OpenLoopRep}]
		The matrices $A_0$, $A_1$ and $B$ of the system \eqref{Eq:System} are related through data by
		\begin{align}
			X_{\{1\}}&=\begin{bmatrix}B & A_1 & A_0\end{bmatrix}W_0.
			\label{AppEq:SMatrixData}
		\end{align} 
		\textit{Sufficiency:} Since by assumption $W_0$ has full-row rank, we obtain from \eqref{AppEq:SMatrixData} that

		\begin{align}
			%\begin{bmatrix}B & A_1 & A_0\end{bmatrix}W_0&=X_{\{1\}}\nonumber\\
			\begin{bmatrix}B & A_1 & A_0\end{bmatrix}&= X_{\{1\}}W^{\dagger}_0. \label{AppEq:SysMatRep}
		\end{align}
		Using \eqref{AppEq:SysMatRep} in \eqref{Eq:System2} yields \eqref{Eq:OpenLoopRep}. 
		
		\textit{Necessity:} If $\mathrm{rank}(W_0)<m+2n$, then \eqref{AppEq:SMatrixData} together with the Rouché-Capelli theorem \cite{Shafarevich2012} implies that the matrices $A_0$, $A_1$ and $B$ of the system \eqref{Eq:System} cannot be determined univocally. 
		
		Therefore, the condition $\mathrm{rank}(W_0)=m+2n$ is necessary and sufficient to represent the open-loop system through data.
	\end{proof}

	\begin{proof}[Proof of Lemma \ref{Lem:ClosedLoopRep}]
		The closed-loop system \eqref{Eq:ClosedLoopSys} can be rewritten as
		\begin{align*}
			x(k+1)=\begin{bmatrix}B & A_1 & A_0\end{bmatrix}\begin{bmatrix}K & 0 & 0\\ 0 & I_n & 0\\ 0 & 0 & I_n\end{bmatrix}\begin{bmatrix}x_{h_2}(k) \\ x_{h_1}(k)\\ x(k)\end{bmatrix}.
		\end{align*}
		\textit{Sufficiency:} Since by assumption $\mathrm{rank}(W_0)=m+2n$, one has that
		\begin{align*}
			\mathrm{rank}\left(\left[\begin{array}{c | c}\begin{bmatrix}K & 0 & 0\\ 0 & I_n & 0\\ 0 & 0 & I_n\end{bmatrix} & W_0\end{array}\right]\right)=\mathrm{rank}(W_0).
		\end{align*}
		Thus, by the Rouché-Capelli theorem \cite{Shafarevich2012}, there exists a $T\times 3n$ matrix $G_K$, such that \eqref{Eq:GK} holds. Therefore,
		\begin{align*}
			\begin{bmatrix}B & A_1 & A_0\end{bmatrix}\begin{bmatrix}K & 0 & 0\\ 0 & I_n & 0\\ 0 & 0 & I_n\end{bmatrix}&=\begin{bmatrix}B & A_1 & A_0\end{bmatrix}W_0G_k\\
				&=X_{\{1\}}G_K,
		\end{align*}
		where the relation \eqref{AppEq:SMatrixData} has been used. 
		
		\textit{Necessity:} If $\mathrm{rank}(W_0)<m+2n$, then by the Rouché-Capelli theorem \cite{Shafarevich2012}, not for any matrix $\mathrm{diag}(K,I_n,I_n)$ there is a matrix $G_K$ that satisfies \eqref{Eq:GK}.
		
		This proves the main claim of Lemma \ref{Lem:ClosedLoopRep}. The explicit formula for $K$ in \eqref{Eq:Kformula} is obtained from \eqref{Eq:GK} by considering the definition of $W_0$ in \eqref{Eq:W0}.		
	\end{proof}
	
	\begin{proof}[Proof of Theorem \ref{Thm:Stabilization}]
		Inspired by the procedure of \cite[Sec.~6.1.3.2]{Fridman2014}, we propose the following Lyapunov-Krasovskii functional
		\begin{align}
			V(k)&=V_P(k)+V_S(k)+\sum^{2}_{i=1}V_{R,i}(k), \label{App:LKF}
		\end{align}
		with
		\begin{align*}
			V_P(k)&=x^\top(k)Px(k),\\
			V_S(k)&=\sum^{k-1}_{j=k-\bar{h}}x^\top(j)Sx(j),\\
			V_{R,i}(k)&=\bar{h}\sum^{-1}_{m=-\bar{h}} \sum^{k-1}_{j=k+m} \bar{y}^\top(j)R_i\bar{y}(j),\\
			\bar{y}(j):&=x(j+1)-x(j),
		\end{align*}
		where $P>0$, $S>0$, $R_i>0$, with $i=\{1,2\}$, are $n\times n$ matrix variables. We are interested in computing $V(k+1)-V(k)$, which can be done by taking the difference of each of its components. By direct calculation, the following difference equations are obtained:
		\begin{align*}
			V_{P}(k+1)-V_P(k)&=x^\top(k+1)Px(k+1)-x^\top(k)Px(k),\\
			V_S(k+1)-V_S(k)&=x^\top(k)Sx(k)-x^\top_{\bar{h}}(k)Sx_{\bar{h}}(k),\\
			V_{R,i}(k+1)-V_{R,i}(k)&=\bar{h}^2\bar{y}^\top(k)R_i\bar{y}(k)\\
				&\qquad\qquad -\bar{h}\sum^{k-1}_{j=k-\bar{h}}\bar{y}^\top(j)R_i\bar{y}(j).
		\end{align*}
		The summation term in $V_{R,i}(k+1)-V_{R,i}(k)$ is rewritten as
		\begin{align*}
			\bar{h}\sum^{k-1}_{j=k-\bar{h}}\bar{y}^\top(j)R_i\bar{y}(j)&=\\
				\bar{h}\sum^{k-1-h_{i}(k)}_{j=k-\bar{h}}&\bar{y}^\top(j) R_i\bar{y}(j)+\bar{h}\sum^{k-1}_{j=k-h_{i}(k)}\bar{y}^\top(j)R_i\bar{y}(j).
		\end{align*}
		By applying Jensen's inequality twice \cite[Sec.~6.1.3.2]{Fridman2014}, one obtains
		\begin{align*}
			\bar{h}\sum^{k-1-h_{i}(k)}_{j=k-\bar{h}}&\bar{y}^\top(j)R_i\bar{y}(j)\geq\\
				\frac{\bar{h}}{\bar{h}-h_{i}(k)}&\left[x_{h_{i}(k)}(k)-x_{\bar{h}}(k)\right]^\top R_i \left[x_{h_{i}(k)}(k)-x_{\bar{h}}(k)\right],
		\end{align*}
		\begin{align*}
			\bar{h}\sum^{k-1}_{j=k-h_{i}(k)}&\bar{y}^\top(j)R_i\bar{y}(j)\geq\\
				\frac{\bar{h}}{h_{i}(k)}&\left[x(k)-x_{h_{i}(k)}(k)\right]^\top R_i\left[x(k)-x_{h_{i}(k)}(k)\right].
		\end{align*}
		Furthermore, by invoking the reciprocally convex approach \cite[Lem.~3.4]{Fridman2014} one has
		\begin{align}
			&\bar{h}\sum^{k-1}_{j=k-\bar{h}}\bar{y}^\top(j)R_i\bar{y}(j)\geq\nonumber\\
				&\;\frac{\bar{h}}{\bar{h}-h_{i}(k)}\left[x_{h_{i}(k)}(k)-x_{\bar{h}}(k)\right]^\top R_i \left[x_{h_{i}(k)}(k)-x_{\bar{h}}(k)\right]\nonumber\\
				&\;+\frac{\bar{h}}{h_{i}(k)}\left[x(k)-x_{h_{i}(k)}(k)\right]^\top R_i\left[x(k)-x_{h_{i}(k)}(k)\right]\geq \nonumber\\
			&\begin{bmatrix}x(k)-x_{h_{i}(k)}(k)\\ x_{h_{i}(k)}(k)-x_{\bar{h}}(k)\end{bmatrix}^\top \begin{bmatrix}R_i & S_{12,i}\\ \star & R_i\end{bmatrix}\begin{bmatrix}x(k)-x_{h_{i}(k)}(k)\\ x_{h_{i}(k)}(k)-x_{\bar{h}}(k)\end{bmatrix},\label{App:RecCon01}
		\end{align}
		for any $S_{12,i}\in\mathbb{R}^{n\times n}$ satisfying
		\begin{align*}
			\begin{bmatrix}R_i & S_{12,i}\\ \star & R_i\end{bmatrix}\geq 0.
		\end{align*}
		Consider the short-hand
		\begin{align}
			\chi^\top(k)=\left[x^\top(k), x^\top_{h_{1}(k)}(k), x^\top_{h_{2}(k)}(k), x^\top_{\bar{h}}(k), x^\top(k+1)\right]. \label{App:shorthand01}
		\end{align}
		Then, by using \eqref{App:RecCon01} and \eqref{App:shorthand01} we obtain 
		\begin{align}
			\sum_{i=1}^{2}V_{R,i}(k+1)-V_{R,i}(k)&\leq \bar{h}^2\bar{y}^\top(k)\big(R_1+R_2\big)\bar{y}(k)\nonumber\\
				&\qquad\qquad -\chi^\top(k)\Phi_1\chi(k),
			\label{App:IneMatrix01}
		\end{align}
		where $\Phi_1$ is given in \eqref{App:Phi01}. 
		
		Now, consider \eqref{Eq:DataRepCL} and \eqref{Eq:GK} in Lemma \ref{Lem:ClosedLoopRep}. From them it follows that
		\begin{align}
			\begin{split}
				BK &= X_{\{1\}}G_K\begin{bmatrix}I_n & 0 & 0\end{bmatrix}^\top,\\
				A_1 &= X_{\{1\}}G_K\begin{bmatrix}0 & I_n & 0\end{bmatrix}^\top,\\
				A_0 &= X_{\{1\}}G_K\begin{bmatrix}0 & 0 & I_n\end{bmatrix}^\top,
			\end{split}	\label{Eq:DBMatrices}
		\end{align}		
		subject to
		\begin{align}
			\begin{split}
				0 &= U_{h_2,\{0\}}G_K\begin{bmatrix}0 & I_n & 0\end{bmatrix}^\top=U_{h_2,\{0\}}G_K\begin{bmatrix}0 & 0 & I_n\end{bmatrix}^\top,\\
				0 &= X_{h_1,\{0\}}G_K\begin{bmatrix}I_n & 0 & 0\end{bmatrix}^\top = X_{h_1,\{0\}}G_K\begin{bmatrix}0 & 0 & I_n\end{bmatrix}^\top,\\
				0 &= X_{\{0\}}G_K\begin{bmatrix}I_n & 0 & 0\end{bmatrix}^\top = X_{\{0\}}G_K\begin{bmatrix}0 & I_n & 0\end{bmatrix}^\top.
			\end{split} \label{Eq:DBRestrictions}
		\end{align}
				
		Now, combining the descriptor method \cite[Sec.~3.5.2]{Fridman2014} with the data-based representation of the system matrices in \eqref{Eq:DBMatrices}, we obtain
		\begin{align}
			0&=2\left[x^\top(k)P^\top_2+x^\top(k+1)P^\top_3\right]\times \nonumber\\
				&\quad\times\left[X_{\{1\}}G_K\left[x^\top_{h_{2}(k)}(k),x^\top_{h_{1}(k)}(t),x^\top(k)\right]^\top-x(k+1)\right] \nonumber\\
			&=-2x^\top(k)P^\top_2x(k+1)-2 x^\top(k+1)P^\top_3x(k+1) \nonumber\\
			&\qquad +2\big(x^\top (k)P^\top_2+x^\top(k+1)P^\top_3\big)X_{\{1\}}G_K\times \nonumber\\
			&\qquad\times \left(\begin{bmatrix}I_n\\0\\0\end{bmatrix}x_{h_{2}(k)}(k)+\begin{bmatrix}0\\I_n\\0\end{bmatrix}x_{h_{1}(k)}(k)+\begin{bmatrix}0\\0\\I_n\end{bmatrix}x(k)\right), \label{App:IneMatrix02}
		\end{align}
		where $P_2\in\mathbb{R}^{n\times n}$ and $P_3\in\mathbb{R}^{n\times n}$ are matrix variables. By using the short-hand \eqref{App:shorthand01}, we can rewrite \eqref{App:IneMatrix02} as the quadratic form 
		\begin{align}
			0=\chi^\top(k)\Phi_2\chi(k),
			\label{App:IneMatrix03}
		\end{align}
		with $\Phi_2$ given in \eqref{App:Phi02}. 
		
		Now, retaking the calculation $V(k+1)-V(k)$, by considering \eqref{App:IneMatrix01}, \eqref{App:IneMatrix03}, and adding $\Phi_1$ and $\Phi_2$ we obtain
		\begin{align}
			V(k+1)-V(k)\leq -\chi^\top(k)\Phi\,\chi(k), \label{App:DiffV}
		\end{align}
		where $\Phi$ is given in \eqref{App:Phi}. 
		
		By recalling that both $G_K$ and $K$ are design parameters, an inspection of $\Phi$ in \eqref{App:Phi} reveals that it contains nonlinear terms in the decision variables. In order to reformulate $\Phi$ as an LMI, we follow the standard approach in control design of TDSs via the descriptor method, see \cite[Chapter 6]{Fridman2014}. Since $\Phi>0$ in \eqref{App:Phi} implies that $\Phi_{55}>0$, we have $P_3+P^\top_3>0$, meaning that $P_3$ is invertible. We choose $P_3=\varepsilon P_2,$ where $\varepsilon>0$ is a scalar tuning parameter. Then, we define $\bar{P}_2=P^{-1}_2$ and the matrices
		\begin{align}
			\begin{split}
				&\bar{P}=\bar{P}^\top_2P\bar{P}_2, \qquad \bar{R}_i=\bar{P}^\top_2R_i\bar{P}_2,\\ 
				&\bar{S}=\bar{P}^\top_2S\bar{P}_2, \qquad \bar{S}_{12,i}=\bar{P}^\top_2S_{12,i}\bar{P}_2.
			\end{split} \label{App:CongMat}
		\end{align}
		Consider the congruent transformation $\bar{\Phi}=\mathbf{P}^\top \Phi\mathbf{P}$, with
		\begin{align*}
			\mathbf{P}=\mathrm{diag}\big(\bar{P}_2,\,\bar{P}_2,\,\bar{P}_2,\,\bar{P}_2,\,\bar{P}_2\big).
		\end{align*}
		The resulting matrix $\bar{\Phi}$ is given in \eqref{App:barPhi}, which is linear in all decision variables. Now, inspired by \cite{Persis2019}, we introduce the auxiliary matrix variables
		\begin{align}
			Q_1=G_K\begin{bmatrix}I_n\\0\\0\end{bmatrix}\bar{P}_2,\,
			Q_2=G_K\begin{bmatrix}0\\I_n\\0\end{bmatrix}\bar{P}_2,\,
			Q_3=G_K\begin{bmatrix}0\\0\\I_n\end{bmatrix}\bar{P}_2. \label{App:DataSubs}
		\end{align}
		By considering \eqref{Eq:W0} and the restriction \eqref{Eq:GK}, it holds that
		\begin{align}
			\begin{split}
				K\bar{P}_2&=U_{h_2,\{0\}}G_K\begin{bmatrix}I_n\\0\\0\end{bmatrix}\bar{P}_2=U_{h_2,\{0\}}Q_1,\\
				\bar{P}_2&=X_{h_1,\{0\}}G_K	\begin{bmatrix}0 \\ I_n\\ 0\end{bmatrix}\bar{P}_2 =	X_{h_1,\{0\}}Q_2,\\
				\bar{P}_2&=X_{\{0\}}G_K\begin{bmatrix}0\\0\\I_n\end{bmatrix}\bar{P}_2=X_{\{0\}}Q_3.
			\end{split} \label{App:ControlEq01}
		\end{align}	
		Replacing the previous definitions in $\bar{\Phi}$ in \eqref{App:barPhi} yields the matrix $\bar{\mathbf{\Phi}}$ in \eqref{Eq:barBFPhi} and the data-based set of inequalities \eqref{Eq:Thm01a} and \eqref{Eq:Thm01b}. The equality restrictions \eqref{Eq:QRestrictions} follow from \eqref{Eq:DBRestrictions} and the definitions \eqref{App:DataSubs} and \eqref{App:ControlEq01}. The control gain $K$ is obtained from \eqref{App:ControlEq01} and is given in \eqref{Eq:Thm01control}. Then, if the above-mentioned inequalities hold, we have $V(k+1)-V(k)<0$. By invoking \cite[Thm.~6.1]{Fridman2014}, the assertions of the theorem follow.
	\end{proof}	
		
	\begin{proof}[Proof of Corollary \ref{Cor:GuaranteedCostControl}]
		Consider the Lyapunov-Krasovskii functional given in \eqref{App:LKF} and the short-hand introduced in \eqref{App:shorthand01}. From \eqref{App:DiffV} together with the performance output $z(k)$ given in \eqref{Eq:PerfOutput}, we have
		\begin{align*}
			V(k+1)-V(k)+z^\top(k)z(k)\leq -\hat{\chi}^\top(k)\Psi\hat{\chi}(k),
		\end{align*}
		where $\hat{\chi}^\top(k)=[\chi^\top(k),z^\top(k)]$ and
		\begin{gather}
			\Psi = \left[\begin{array}{c c}\Phi & -\kappa^\top\\ \star & I_q\end{array}\right],\label{App:Psi}\\
			\kappa= \begin{bmatrix}L_1 & L_2 & DK & 0 & 0\end{bmatrix}.\nonumber
		\end{gather}
		The matrix $\Phi$ is given in \eqref{App:Phi}. Consider once more the matrices introduced in \eqref{App:CongMat}. As before, in order to obtain an LMI, we perform a congruent transformation. Define the block-diagonal matrix $\mathbf{P}$
		\begin{align*}
			\mathbf{P}=\mathrm{diag}\big(\bar{P}_2,\bar{P}_2,\bar{P}_2,\bar{P}_2,\bar{P}_2,I_q\big).
		\end{align*}
		Using $\mathbf{P}$, we define the congruent matrix $\bar{\Psi}=\mathbf{P}^\top\Psi\mathbf{P}$, and, by considering the substitutions \eqref{App:DataSubs}, we obtain the matrix
		\begin{gather*}
			\bar{\Psi} = \left[\begin{array}{c c}\bar{\Phi} & -\bar{\kappa}^\top\\ \star & I_q\end{array}\right],\\
			\bar{\kappa}= \begin{bmatrix}L_1\bar{P}_2 & L_2\bar{P}_2 & DK\bar{P}_2 & 0 & 0\end{bmatrix},\nonumber
		\end{gather*}
		with $\bar{\Phi}$ given in \eqref{App:barPhi}. By replacing $\bar{P}_2$ following \eqref{App:ControlEq01}, it results $\bar{\mathbf{\Psi}}$ in \eqref{Eq:barBFPsi}. By invoking \cite[Prop.~6.5]{Fridman2014}, we have that $J\leq V(x_0)=x^\top(0)Px(0)$ if the data-based inequalities \eqref{Eq:barBFPsi} and \eqref{Eq:Cor01b} are satisfied. In addition, we have $V(0)\leq \delta$, and by transitivity $J\leq \delta$, if \eqref{Eq:Cor01c} is simultaneously satisfied. This is achieved by selecting the feedback gain $K$ according to \eqref{Eq:Cor01control}. 
	\end{proof}

	\begin{proof}[Proof of Corollary \ref{Cor:HinftyControl}]
		Consider the performance index 
		\begin{align*}
			J_{\infty}=\sum^{\infty}_{k=0}\left(z^\top(k)z(k)-\gamma\omega^\top(k)\omega(k)\right).
		\end{align*}
		If one can find a feedback gain $K$ such that $J_{\infty}<0$, then the system \eqref{Eq:PerturbedSys} has an $L_2$-gain less than $\gamma$ \cite{Fridman2014}. Such gain can be found as follows. Consider the short hand $$\hat{\chi}^\top_2(k)=[\hat{\chi}^\top(k),\omega^\top(k)].$$ From \eqref{App:DiffV} together with the system dynamics \eqref{Eq:PerturbedSys} and \eqref{Eq:PerfOutputH}, we obtain
		\begin{align}
			\begin{split}
				-&\hat{\chi}^\top_2(k)\Gamma\hat{\chi}_2(k)\geq\\
				&\quad V(k+1)-V(k)+\left(z^\top(k)z(k)-\gamma\omega^\top(k)\omega(k)\right),
			\end{split}
		\end{align}
		with 
		\begin{gather}
			\Gamma = \left[\begin{array}{c c}\Psi & \kappa^\top_2\\ \star & \gamma\,I_p\end{array}\right],\\
			\kappa_2=\begin{bmatrix}-D^\top_0P_2 & 0 & 0 & 0 & -\varepsilon D^\top_0P_2 & 0\end{bmatrix},\nonumber
		\end{gather}
		where $\Psi$ is given in \eqref{App:Psi}. As in the proof of Theorem \ref{Thm:Stabilization} and Corollary \ref{Cor:GuaranteedCostControl}, we consider the congruent matrix $\bar{\Gamma}=\mathbf{P}^\top\Gamma\mathbf{P}$ with
		\begin{align*}
			\mathbf{P}=\mathrm{diag}\big(\bar{P}_2,\bar{P}_2,\bar{P}_2,\bar{P}_2,\bar{P}_2,I_q,I_p\big),
		\end{align*}
		where $\bar{P}_2=P^{-1}_2$. This is done to obtain an LMI from the original BMI. By taking into account \eqref{App:CongMat}, \eqref{App:DataSubs} and \eqref{App:ControlEq01}, we obtain the matrix $\bar{\mathbf{\Gamma}}$ in \eqref{Eq:barBFGamma}. Hence, if \eqref{Eq:barBFGamma} and \eqref{Eq:Cor02b} are satisfied, the gain given in \eqref{Eq:Cor02control} guarantees a $L_2$ gain of $\gamma$ for $\omega(k)\neq 0$.
	\end{proof}

	\begin{proof}[Proof of Proposition \ref{Prop:DelayIdent}]
		Let $h_1=j^\star$ and $h_2=i^\star$ hold, and consider the short-hands $W_{0\star}=W_{0,(i^\star,j^\star)}$, $W^{\mathrm{nom}}_{0\star}=W^{\mathrm{nom}}_{0,(i^\star,j^\star)}$ and $W^{\delta}_{0\star}=W^{\delta}_{0,(i^\star,j^\star)}$. In the unperturbed case (i.e., $W^{\delta}_{0\star}=0$ and $\,X^{\delta}_{\{1\}}=0$), the orthogonal distance $d_{(i^\star,j^\star)}(X_{\{1\}})$ defined in \eqref{Eq:OrthDis} is zero, in other words \eqref{Eq:DelayIdentNom} holds. In contrast, in the perturbed case ($W^{\delta}_{0\star}\neq 0,\,X^{\delta}_{\{1\}}\neq 0$) we have
		\begin{align}
			d_{(i^\star,j^\star)}&\big(X_{\{1\}}\big)=\left\|X_{\{1\}}\left(I_T-W_{0\star}^{\dagger}W_{0\star}\right)\right\|_2 \nonumber\\
				&= \bigg\|\Big(X^{\mathrm{nom}}_{\{1\}}+X^{\delta}_{\{1\}}\Big)\left(I_T-W_{0\star}^{\dagger}W_{0\star}\right)\bigg\|_2. \label{App:Distance01}
		\end{align}
		Therefore, it follows that
		\begin{align}
			d_{(i^\star,j^\star)}\big(X_{\{1\}}\big)\leq d_{(i^\star,j^\star)}\big(X^{\mathrm{nom}}_{\{1\}}\big)+d_{(i^\star,j^\star)}\big(X^{\delta}_{\{1\}}\big). \label{App:DistAux}
		\end{align}
		From the properties of orthogonal projectors, we immediately have 
		\begin{align}
			d_{(i^\star,j^\star)}(X^{\delta}_{\{1\}})\leq \|X^{\delta}_{\{1\}}\|_2\leq r_{X^{\delta}_{\{1\}}}. \label{App:DistAux0}
		\end{align}
		Now, we proceed to upper bound $d_{(i^\star,j^\star)}(X^{\mathrm{nom}}_{\{1\}})$, for which we recall \eqref{Eq:PerturbedW0} and consider
		\begin{align}
			I_T-W_{0\star}^{\dagger}&W_{0\star} = I_T-\big(W^{\mathrm{nom}}_{0\star}\big)^{\dagger}W^{\mathrm{nom}}_{0\star}\nonumber\\
				&+\big(W^{\mathrm{nom}}_{0\star}\big)^{\dagger}W^{\mathrm{nom}}_{0\star}- W_{0\star}^{\dagger}W_{0\star} \label{App:DistAux01}.
		\end{align}		
		In account of \eqref{App:DistAux01} and the fact that
		\begin{align}
			X^{\mathrm{nom}}_{\{1\}}\left(I_T-\left(W^{\mathrm{nom}}_{0\star}\right)^{\dagger}W^{\mathrm{nom}}_{0\star}\right)=0, \label{App:DistAux01b}
		\end{align}
		we have from \eqref{App:Distance01} that
		\begin{align}
			d_{(i^\star,j^\star)}\big(X^{\mathrm{nom}}_{\{1\}}\big)=
			\bigg\|X^{\mathrm{nom}}_{\{1\}}\bigg(\big(W^{\mathrm{nom}}_{0\star}\big)^{\dagger}W^{\mathrm{nom}}_{0\star}- W_{0\star}^{\dagger}W_{0\star}\bigg) \bigg\|_2. \label{App:DistAux02}
		\end{align}
		Therefore, to obtain an upper bound for $d_{(i^\star,j^\star)}(X^{\mathrm{nom}}_{\{1\}})$ we need to bound the difference
		\begin{align*}
			\left\|\big(W^{\mathrm{nom}}_{0\star}\big)^{\dagger}W^{\mathrm{nom}}_{0\star}- W_{0\star}^{\dagger}W_{0\star}\right\|_2.
		\end{align*}
		Following \cite[Thm.~4.1]{Stewart1977}, and under Assumption \ref{Assum:Noise}, it holds that
		\begin{align*}
			\left\|\big(W^{\mathrm{nom}}_{0\star}\big)^{\dagger}W^{\mathrm{nom}}_{0\star}-W_{0\star}^{\dagger}W_{0\star}\right\|_2&\leq\\ \psi&\left(\frac{\left\|W^{\delta,12}_{0\star}\right\|_2\left\|W^{\dagger}_{0\star}\right\|_2}{1-\left\|W^{\delta,11}_{0\star}\right\|_2\left\|W^{\dagger}_{0\star}\right\|_2}\right),
		\end{align*}
		with $\psi(\cdot)$ defined in \eqref{App:DistAuxpsi}, and $\|W^{\delta,11}_{0\star}\|_2$ and $\|W^{\delta,12}_{0\star}\|_2$ as in \eqref{Eq:W0decomp}. Given that $\psi$ is a monotonically increasing function of its argument and that with Assumption \ref{Assum:Noise} the relation \eqref{Eq:PropDelay01} holds, we have that
		\begin{align*}
			1-\left\|W^{\delta,11}_{0\star}\right\|_2\left\|W^{\dagger}_{0\star}\right\|_2\geq 1-r_{W^{\delta,11}_{0,(i,j)}}\left\|W^{\dagger}_{0\star}\right\|_2>0,
		\end{align*}	
		it follows that
		\begin{align}
			\left\|\big(W^{\mathrm{nom}}_{0\star}\big)^{\dagger}W^{\mathrm{nom}}_{0\star}-W_{0\star}^{\dagger}W_{0\star}\right\|_2&\leq \psi\left(\frac{r_{W^{\delta,12}_{0\star}}\left\|W^{\dagger}_{0\star}\right\|_2}{1-r_{W^{\delta,11}_{0\star}}\left\|W^{\dagger}_{0\star}\right\|_2}\right)\nonumber \\
			&=\psi(\sigma^\star), \label{App:DistAux03}
		\end{align}	 
		with $r_{W^{\delta,11}_{0\star}}$ and $r_{W^{\delta,12}_{0\star}}$ as in \eqref{Eq:PropDelay01} and $\sigma^\star$ given in \eqref{sigmastar}. Hence, from \eqref{App:DistAux02} and \eqref{App:DistAux03}, we obtain
		\begin{align}
					d_{i^\star,j^\star}\big(X^{\mathrm{nom}}_{\{1\}}\big)&\leq \big\|X^{\mathrm{nom}}_{\{1\}}\big\|_2\cdot \psi\left(\sigma^\star\right)\nonumber \\
%			d_{i^\star,j^\star}\big(X^{\mathrm{nom}}_{\{1\}}&\big)\leq \big\|X^{\mathrm{nom}}_{\{1\}}\big\|_2\cdot \psi\left(\frac{r_{W^{\delta,12}_{0\star}}\left\|W^{\dagger}_{0\star}\right\|_2}{1-r_{W^{\delta,11}_{0\star}}\left\|W^{\dagger}_{0\star}\right\|_2}\right)\nonumber \\
						&\leq \big(\|X_{\{1\}}\|_2+r_{X^{\delta}_{\{1\}}}\big)\cdot \psi\left(\sigma^\star\right)\label{App:DistAux04}.
%			\leq \big(\|X_{\{1\}}\|_2&+r_{X^{\delta}_{\{1\}}}\big)\cdot \psi\left(\frac{r_{W^{\delta,12}_{0\star}}\left\|W^{\dagger}_{0\star}\right\|_2}{1-r_{W^{\delta,11}_{0\star}}\left\|W^{\dagger}_{0\star}\right\|_2}\right)\label{App:DistAux04}
		\end{align}		
		In account of \eqref{App:DistAux} and the bounds \eqref{App:DistAux0} and \eqref{App:DistAux04}, the upper bound for $d_{(i^\star,j^\star)}(X_{\{1\}})$ given in \eqref{Eq:DelayCond} follows.

		For $i\neq i^\star$ and $j\neq j^\star$, \eqref{App:DistAux01b} does not hold, and thus, the upper bound for $d_{(i,j)}(X_{\{1\}})$ given in \eqref{Eq:DelayCond} increases.
	
		Once the correct delays are determined, the corresponding open- and closed-loop data-based system representations are obtained via Proposition~\ref{Prop:OpenLoopRep} and Lemma~\ref{Lem:ClosedLoopRep}, respectively.
		
		Finally, if for two distinct pairs $(i,j)$ the condition \eqref{Eq:DelayCond} holds simultaneously, there are two candidates for the delay values and it is not possible to distinguish between them with the derived bound \eqref{Eq:DelayCond} and with the available data.
	\end{proof}

	\begin{proof}[Proof of Theorem \ref{Thm:GainNoise}]
		As in Theorem~\ref{Thm:GainNoise}, let there exist the matrices $Q_i$ for $i=\{1,2,3\}$, $\bar{P}$, $\bar{S}$, $\bar{R}_i$ and $\bar{S}_{12,i}$ for $i=\{1,2\}$, and compute $K$ following \eqref{Eq:NoiseThm01control}. Following \eqref{App:ControlEq01}, define $\bar{P}_2=X_{\{0\}}Q_3$. By using \eqref{App:CongMat}, the matrices $P$, $S$, $R_i$ and $S_{12,i}$ for $i=\{1,2\}$ can be computed. With these matrices, the Lyapunov-Krasovskii functional \eqref{App:LKF} can be built and used to analyze the stability of \eqref{Eq:System} with feedback gain $K$. 
		
		Consider the proof of Theorem \ref{Thm:Stabilization}. The effect of the noise impacts the terms introduced by the descriptor method, i.e. \eqref{App:IneMatrix02}, since
		\begin{align}
			X_{\{1\}}G_K\begin{bmatrix}x_{h_{2}(k)}(k)\\ x_{h_{1}(k)}(k)\\ x(k)\end{bmatrix}-x(k+1)=&\nonumber\\
				\left(X_{\{1\}}G_K-\begin{bmatrix}BK & A_1 & A_0\end{bmatrix}\right)&\begin{bmatrix}x_{h_{2}(k)}(k)\\ x_{h_{1}(k)}(k)\\ x(k)\end{bmatrix}\neq 0. \label{App:Mismatch}
		\end{align}
		To account for this mismatch, we compute the error induced by the corrupted data. Consider the nominal part of the data $X^{\mathrm{nom}}_{\{1\}}$ and $W^{\mathrm{nom}}_0$. From Assumption \ref{Assum:Noise}.1 we have $\mathrm{rank}(W^{\mathrm{nom}}_0)=m+2n$. It follows that
		\begin{align*}
			\begin{bmatrix}B & A_1 & A_0\end{bmatrix} = X^{\mathrm{nom}}_{\{1\}} \left(W^{\mathrm{nom}}_0\right)^{\dagger}.
		\end{align*}
		From \eqref{Eq:GK} in combination with the expression above, we get
		\begin{align*}
			\begin{bmatrix}BK & A_1 & A_0\end{bmatrix} = X^{\mathrm{nom}}_{\{1\}} \left(W^{\mathrm{nom}}_0\right)^{\dagger}W_{0}G_K.
		\end{align*}
		By using this relation, we obtain
		\begin{align}
			X_{\{1\}}G_K-&\begin{bmatrix}BK & A_1 & A_0\end{bmatrix}= \nonumber\\
				&\left(X_{\{1\}}-X^{\mathrm{nom}}_{\{1\}} \left(W^{\mathrm{nom}}_0\right)^{\dagger}W_{0}\right)G_K. \label{App:NoiseMatrix}
		\end{align}
		Note that $X_{\{1\}}=X^{\mathrm{nom}}_{\{1\}}+X^{\delta}_{\{1\}}$ and $W_{0}=W^{\mathrm{nom}}_0+W^{\delta}_0$.  We further continue from \eqref{App:NoiseMatrix} as
		\begin{align*}
			X_{\{1\}}&-X^{\mathrm{nom}}_{\{1\}} \left(W^{\mathrm{nom}}_0\right)^{\dagger}W_{0}=\\
				&X^{\mathrm{nom}}_{\{1\}}\left(I_T-\left(W^{\mathrm{nom}}_0\right)^{\dagger}W_{0}\right)+X^{\delta}_{\{1\}} \\
				=&\, X^{\mathrm{nom}}_{\{1\}}\left(I_T-\left(W^{\mathrm{nom}}_0\right)^{\dagger}W^{\mathrm{nom}}_{0}-\left(W^{\mathrm{nom}}_0\right)^{\dagger}W^{\delta}_{0}\right)+X^{\delta}_{\{1\}}\\
				=&\,-X^{\mathrm{nom}}_{\{1\}}\left(W^{\mathrm{nom}}_0\right)^{\dagger}W^{\delta}_{0}+X^{\delta}_{\{1\}} \\
				=&\,-\begin{bmatrix}B & A_1 & A_0\end{bmatrix}W^{\delta}_{0}+X^{\delta}_{\{1\}}=-\Delta_{[\cdot]}. 
		\end{align*}
		Then, in order to account for the mismatch \eqref{App:Mismatch} and to keep the equation \eqref{App:IneMatrix02} equal to zero, we add the compensating term
		\begin{align*}
			&2\left[x^\top(k)+\varepsilon x^\top(k+1)\right]P^\top_2\times \Delta_{[\cdot]} G_K\begin{bmatrix}x_{h_{2k}}(k)\\ x_{h_{1k}}(k)\\ x(k)\end{bmatrix}
		\end{align*}
		to \eqref{App:IneMatrix02}. This term can be written as the quadratic form $\chi^\top(k)\tilde{\Phi}_2\chi(k)$ with $\tilde{\Phi}_2$ given in \eqref{App:TildePhi2}. Carrying this term along and continuing as in the proof of Theorem \ref{Thm:Stabilization}, we obtain
		\begin{align}
			V(k+1)-V(k)\leq -\chi^\top(k)\left(\bar{\Phi}-\tilde{\Phi}_2\right)\chi(k). \label{App:diffVNoise}
		\end{align}
		As before, to obtain an LMI from \eqref{App:diffVNoise}, we make use of a congruent transformation. Let $\mathbf{P}=\mathrm{diag}(\bar{P}_2,\bar{P}_2,\bar{P}_2,\bar{P}_2,\bar{P}_2)$, and consider the congruent transformation
		\begin{align*}
			\mathbf{P}^\top\left(\bar{\Phi}-\tilde{\Phi}_2\right)\mathbf{P}=\bar{\mathbf{\Phi}}-\tilde{\mathbf{\Phi}}_2,
		\end{align*}
		where $\bar{\mathbf{\Phi}}$ is given in \eqref{Eq:barBFPhi} and 
		\begin{align}
			\tilde{\mathbf{\Phi}}_2=\mathbf{\Delta}\mathbf{Q}+\mathbf{Q}^\top\mathbf{\Delta}^\top, \label{App:barPhi2}
		\end{align}
		with $\mathbf{\Delta}=\mathrm{diag}\left(\Delta_{[\cdot]},\Delta_{[\cdot]},\Delta_{[\cdot]},\Delta_{[\cdot]},\Delta_{[\cdot]}\right)$ and $\mathbf{Q}$ is given in \eqref{Eq:bfQ}. Using \eqref{App:barPhi2}, it follows that
		\begin{align*}
			\tilde{\mathbf{\Phi}}_2 \leq \lambda \mathbf{\Delta}\mathbf{\Delta}^\top+\frac{1}{\lambda}\mathbf{Q}^\top\mathbf{Q} \leq \alpha^2 \lambda I_{5n} +\frac{1}{\lambda}\mathbf{Q}^\top\mathbf{Q},
		\end{align*}
		with $\lambda>0$ and $\alpha>0$ as in Theorem \eqref{Thm:GainNoise}. Therefore, the negativeness of \eqref{App:diffVNoise} can be ensured if
		\begin{align*}
			\bar{\mathbf{\Phi}}-\alpha^2 \lambda I_{5n} -\frac{1}{\lambda}\mathbf{Q}^\top\mathbf{Q}>0.
		\end{align*}
		By using the Schur complement, the inequality above is transformed into \eqref{Eq:NoiseThm01a}. Hence, if \eqref{Eq:NoiseThm01a} and \eqref{Eq:NoiseThm01b} are satisfied, it is ensured that $V(k+1)-V(k)<0$ and the origin of \eqref{Eq:System} with feedback $u(k)=Kx(k),$ where $K$ is given in \eqref{Eq:NoiseThm01control} is exponentially stable.
		
	\end{proof}

\bibliographystyle{IEEEtran}
\bibliography{IEEEabrv,ref_2}

\end{document}